\documentclass{amsart}
\usepackage{amsmath,amsthm}
\usepackage{amsfonts,amssymb}
\usepackage{enumerate}
\usepackage{graphicx}

\def\PP{{\mathcal P}}

\def\wt{\widetilde}

\newtheorem{lemma}{Lemma}[section]
\newtheorem{proposition}[lemma]{Proposition}

\newtheorem{theorem}[lemma]{Theorem}
\newtheorem{remark}[lemma]{Remark}

\makeatletter
\@addtoreset{equation}{section}
\makeatother
\def\qed{\hfill $\Box$\smallskip}

\newcommand {\ds} {\displaystyle}
\newcommand {\sgn} {{\rm sign}\,}
\begin{document}

\title[Discrete weighted Markov--Bernstein inequality]
{A discrete weighted Markov--Bernstein inequality for polynomials
and sequences}


\author{Dimitar K. Dimitrov}
\address{Departamento de Matem\'atica Aplicada, IBILCE,
Universidade Estadual Paulista, 15054-000
S\~{a}o Jos\'{e} do Rio Preto, SP, Brazil }
\email{d\underline{\space}k\underline{\space}dimitrov@yahoo.com}

\author{Geno P. Nikolov}
\address{Faculty of Mathematics and Informatics, Sofia University
"St. Kliment Ohridski", 5 James Bourchier Blvd.,
1164 Sofia, Bulgaria}
\email{geno@fmi.uni-sofia.bg}

\thanks{The first author is supported by the Brazilian foundations
CNPq under Grant 306136/2017--1 and FAPESP under Grants
2016/09906--0 and 2016/10357-1. The second author is supported in
part by the Bulgarian National Research Fund through Contract DN
02/14 and by Sofia University Research Fund through Contract
3067/2020.}

\begin{abstract}
For parameters $\,c\in(0,1)\,$ and $\,\beta>0$,  let $\,\ell_{2}(c,\beta)\,$
be the Hilbert space of real functions defined on $\,\mathbb{N}\,$
(i.e., real sequences),  for which
$$
\| f \|_{c,\beta}^2 := \sum_{k=0}^{\infty}\frac{(\beta)_k}{k!}\,c^k\,[f(k)]^2<\infty\,.
$$
We study the best (i.e., the smallest possible) constant $\,\gamma_n(c,\beta)\,$
in the discrete Markov-Bernstein inequality
$$
\|\Delta P\|_{c,\beta}\leq \gamma_n(c,\beta)\,\|P\|_{c,\beta}\,,\quad P\in\PP_n\,,
$$
where $\,\PP_n\,$ is  the set of real algebraic polynomials of
degree at most $\,n\,$ and $\,\Delta f(x):=f(x+1)-f(x)\,$.

We prove that
\begin{itemize}
\item[(i)]
$\ds \gamma_n(c,1)\leq 1+\frac{1}{\sqrt{c}}\,$ for every $\,n\in \mathbb{N}\,$ and
$\ds \lim_{n\to\infty}\gamma_n(c,1)= 1+\frac{1}{\sqrt{c}}\,$;
\item[(ii)]
For every fixed $\,c\in (0,1)\,$, $\,\gamma_n(c,\beta)\,$ is a monotonically
decreasing function of $\,\beta\,$ in $\,(0,\infty)\,$;
\item[(iii)]
For every fixed  $\,c\in (0,1)\,$ and $\,\beta>0\,$, the best Markov-Bernstein
constants $\,\gamma_n(c,\beta)\,$ are bounded uniformly with respect to $\,n$.
\end{itemize}
A similar Markov-Bernstein unequality is proved for sequences in
$\,\ell_{2}(c,\beta)\,$. We also establish a relation between the
best Markov-Bernstein constants $\,\gamma_n(c,\beta)\,$ and the
smallest eigenvalues of certain explicitly given Jacobi matrices.
\end{abstract}

\maketitle

\section{Introduction and statement of the results}

Throughout this paper $\PP_n$ and $\PP_n^{\mathbb{C}}$ stands for the set of
real and complex algebraic polynomials of degree not exceeding $n$ and $\PP$
for all real polynomials. The inequalities
of the form
\begin{equation}\label{e1.1}
\Vert p' \Vert\leq c_{n} \Vert p\Vert,\quad p\in\PP_n\quad \mathrm{or}\quad
 p\in\PP_n^{\mathbb{C}},
\end{equation}
which hold for various norms are called Markov--Bernstein--type inequalities.
Andrey Markov \cite{AM1889} settled the classical case of real polynomials and
the uniform norm in $[-1,1]$. Precisely, he
showed that in this case the Chebyshev polynomial of the first kind,
$T_n(x)=\cos n\arccos x$, $x\in [-1,1]$, is the only (up to a
constant factor) extremal polynomial and the best, that is, the
smallest possible constant $c_{n}$ is equal to $T_n^\prime(1)=n^2$.
Later E. Hille, G. Szeg\H{o} and J. D. Tamarkin \cite{HST} proved inequality  (\ref{e1.1})
for the norm in $L^p[-1,1]$,  $1\leq p <\infty$. The inequality
\begin{equation}\label{BI}
\Vert p' \Vert_{L^p(\partial \mathbb{D})}\leq n \,\Vert p\Vert_{L^p(\partial \mathbb{D})},\quad p\in\PP_n^{\mathbb{C}},
\end{equation}
where, for $0\leq p \leq\infty$,
$$
\Vert f \Vert_{L^p(\partial \mathbb{D})} = \left( \int_{-\pi}^\pi | f(e^{i\theta}) |^p d \theta \right)^{1/p},
$$
holds for every $p>0$. It is usually called the Bernstein inequality though the first proof for the case
$p=\infty$ is due to M. Riesz \cite{MR} (see \cite{Nev2014}). It was then established for $1\leq p < \infty$ and
V. Arestov \cite{Ar} settled the case $p\in (0,1)$. Similar inequalities hold for entire functions. Indeed,
if $f$ is an entire function of exponential type $\sigma$, such that $f \in L^p(\mathbb{R})$, then
\begin{equation}
\label{BEF}
\Vert f' \Vert_{L^p(\mathbb{R})}\leq \sigma \,\Vert f\Vert_{L^p(\mathbb{R})}.
\end{equation}
We refer to \cite[Theorem 11.3.3]{Boas} and \cite{RS1990} for the cases $1\leq p \leq \infty$ and
$p\in (0,1)$, respectively.

Various weighted versions of the above inequalities have been established.
The challenging problem is to find the sharp constant in \eqref{e1.1},
$$
c_{n}=\sup \{ \Vert p' \Vert/\Vert p\Vert \ :\ p\in\PP_n,\ p \ne 0 \}
$$
and in some cases it has been determined explicitly.

When one considers the norm in a Hilbert space the sharp constant $c_{n}$ in Markov's inequality
for polynomials is the largest eigenvalue of a certain matrix. Despite this fact, even in the $L^2$
spaces induced by the classical weight functions of Jacobi
($w_{\alpha,\beta}(x)=(1-x)^{\alpha}(1+x)^{\beta}$, $x\in [-1,1]$,
$\alpha,\beta>-1$), Laguerre ($w_{\alpha}(x)=x^{\alpha}\,e^{-x}$,
$x\in (0,\infty)$) and Hermite $w_{H}(x)=e^{-x^2}$, $x\in
(-\infty,\infty)$), the sharp Markov constants are known only in few cases
(here we do not discuss inequalities relating different norms of a
polynomial and its derivative). For the Laguerre
case $\alpha=0$ P. Tur\'{a}n \cite{PT1960} proved that
$$
c_{n}=\Big(2\sin\frac{\pi}{4n+2}\Big)^{-1}\,.
$$
while in the Hermite case, which is a straightforward one,
$$
c_{n}=\sqrt{2n}
$$
and the Hermite polynomial $H_n(x)$ is the unique (up to a
constant factor) extremal polynomial. In the
case of a constant weight function $w(x)\equiv 1$, $x\in [-1,1]$
(the Legendre case), E. Schmidt \cite{ES1944} proved that, with some
$R\in (-6,13)$,
$$
c_{n}=\frac{(2n+3)^2}{4\pi}\,\Big(1-\frac{\pi^2-3}{3(2n+3)^2}+
\frac{16R}{(2n+3)^4}\Big)\,.
$$
Without any claim for completeness, we mention that bounds for the
best constants in the $L^2$ norms induced by the Laguerre or the
Gegenbauer weight functions are obtained in
\cite{AN2016, ANS2016,  PD1987, PD1991, PD2002, GN2003,  NS2016,
NS2016a}. Regarding the asymptotic behaviour of the best Markov constant
$c_{n}$, we point out that $c_{n}\,$ is $\,O(n^{1/2})$,
$\,O(n)$\, and $O(n^{2})$  as $n\rightarrow\infty$ in the cases of
the $L^2$--norms induced by the Hermite, Laguerre, and Gegenbauer
weight functions, respectively.

Weighted versions of (\ref{BI}) for the so-called weights with
doubling properties were established by G. Mastroianni and V. Totik
\cite{MT} when $1\leq p \leq \infty$ and by T. Erdelyi \cite{Erd}
when $0< p <1$.  Recently D. Lubinsky \cite{Lub14} proved the
weighted analog of (\ref{BEF}) for entire functions of exponential
type, for all $p>0$ and for the same type of doubling weights which,
among others, contain those of the form $(1+x^2)^\alpha$, $\alpha\in
\mathbb{R}$.

In this paper we study a discrete weighted Markov-Bernstein inequality for
real algebraic polynomials. For any pair of parameters $\,(c,\beta)\,$ such that $\,c\in
(0,1)\,$ and $\,\beta>0$, the Meixner inner product and Meixner
norm in $\,\PP\,$ are defined by
\begin{equation}
\label{InP}
\langle f,g\rangle=\langle f,g\rangle_{c,\beta}
:=\sum_{k=0}^{\infty}\frac{(\beta)_k}{k!}\,c^k\,f(k)\,g(k),
\end{equation}
where $(\beta)_k$ is the Poshhammer function, $(\beta)_k=\beta(\beta+1)\cdots
(\beta+k-1)$, $k\in \mathbb{N}$, with $\,(\beta)_0=1$, and
\begin{equation}\label{e1.2}
\Vert f\Vert_{c,\beta}=\langle f, f\rangle_{c,\beta}^{1/2}\,.
\end{equation}
The forward difference (shift) operator $\Delta$ is defined by
$$
\Delta\,f(x):=f(x+1)-f(x)\,,\quad x\in \mathbb{N}_0\,.
$$
The Markov-Bernstein inequality for $\,\PP_n\,$ associated with this
norm is
\begin{equation}\label{e1.3}
\Vert \Delta\,p\Vert_{c,\beta}\leq \gamma_n\,\Vert p\Vert_{c,\beta}\,,\qquad
p\in\PP_n\,.
\end{equation}
We are interested in the best (the smallest possible) constant in
\eqref{e1.3},
\begin{equation}\label{e1.4}
\gamma_n=\gamma_n(c,\beta):=\sup\{\Vert \Delta f\Vert_{c,\beta}\,:\,
f\in\PP_n,\ \Vert f\Vert_{c,\beta}=1\}\,.
\end{equation}

Our main result is the following theorem:

\begin{theorem}\label{Th3}
Let $\,\gamma_n(c,\beta)\,$ be the best constant in Markov-Bernstein
inequality \eqref{e1.3}. Then:
\begin{enumerate}[ (i)~~]
\item
For every $n\in \mathbb{N}$, $\gamma_n(c,1)$ satisfies the inequality
\begin{equation}\label{e1.5}
\gamma_n(c,1)<1+ \frac{1}{\sqrt{c}}\,.
\end{equation}
Moreover,
\begin{equation}\label{e1.6}
\lim_{n\rightarrow\infty}\gamma_n(c,1)=1+ \frac{1}{\sqrt{c}}\,.
\end{equation}
\item
For every fixed $\,n\in \mathbb{N}\,$ and $\,c\in (0,1)$,
$\,\gamma_n(c,\beta)\,$ is a decreasing function of $\beta \in
(0,\infty)\,$.
\item
For every fixed $\,c \in (0,1)\,$ and $\,\beta>0\,$ there exists a
constant  $\,C(c,\beta)>0\,$ such that $\,\gamma_n(c,\beta)\leq
C(c,\beta)\quad \text{ for every }\ n\in \mathbb{N}\,$.
\end{enumerate}
\end{theorem}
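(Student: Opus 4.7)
The plan is to address the three parts in the order (iii), (i), (ii), with (ii) being the main obstacle. For (iii), the $n$-independent bound follows from a crude triangle-inequality estimate. Using $(a-b)^{2}\le 2a^{2}+2b^{2}$ on $\Delta f(k)=f(k+1)-f(k)$ and summing against $\mu_{k}(\beta):=(\beta)_{k}c^{k}/k!$ gives $\|\Delta f\|_{c,\beta}^{2}\le 2\|f(\cdot+1)\|_{c,\beta}^{2}+2\|f\|_{c,\beta}^{2}$. After the index shift $j=k+1$,
$$\|f(\cdot+1)\|_{c,\beta}^{2}=\sum_{j\ge 1}\frac{\mu_{j-1}(\beta)}{\mu_{j}(\beta)}\,\mu_{j}(\beta)f(j)^{2},$$
and the ratio $\mu_{j-1}/\mu_{j}=j/[c(\beta+j-1)]$ is uniformly bounded in $j\ge 1$, with supremum $1/(c\beta)$ if $0<\beta<1$ and $1/c$ if $\beta\ge 1$. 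This yields an explicit $C(c,\beta)$ independent of $n$.

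For (i), the same triangle inequality, used in the cleaner form $\|\Delta f\|_{c,1}\le\|f(\cdot+1)\|_{c,1}+\|f\|_{c,1}$, combined with the identity $\|f(\cdot+1)\|_{c,1}^{2}=c^{-1}\bigl(\|f\|_{c,1}^{2}-f(0)^{2}\bigr)$, produces \eqref{e1.5} in non-strict form. Strictness is obtained by inspecting both equality cases jointly: equality in the triangle inequality forces $f(x+1)=-\lambda f(x)$ identically as polynomials for some $\lambda\ge 0$, and a leading-coefficient comparison (the LHS has leading coefficient $(1+\lambda)$ times that of $f$) then forces $f\equiv 0$. For the limit \eqref{e1.6} I would extend $\Delta$ to a bounded operator on $\ell_{2}(c,1)$: the same triangle bound gives $\|\Delta\|_{\mathrm{op}}\le 1+1/\sqrt{c}$, while the test sequences $g_{r}(k)=(-r)^{k}$, $r\nearrow 1/\sqrt{c}$, lie in $\ell_{2}(c,1)$ and satisfy $\Delta g_{r}=-(1+r)g_{r}$, realizing the operator norm exactly. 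Density of $\PP$ in $\ell_{2}(c,1)$ — standard since the exponentially decaying weight $c^{k}$ makes the moment problem determinate and the Meixner polynomials a complete orthonormal basis — together with the monotonicity of $\gamma_{n}$ in $n$ gives $\gamma_{n}(c,1)\to 1+1/\sqrt{c}$.

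Part (ii) is the delicate step. The plan is to exploit the Jacobi-matrix representation promised in the abstract: the Meixner lowering relation $\Delta M_{n}(x;\beta,c)\propto M_{n-1}(x;\beta+1,c)$ lets one write the matrix of $\Delta^{*}\Delta$ on $\PP_{n}$, in the $(c,\beta)$-orthonormal Meixner basis, as $D(\beta)\,G_{n}(\beta)\,D(\beta)$, where $D(\beta)$ is an explicit diagonal factor and $G_{n}(\beta)$ is the matrix of multiplication by $\beta/(\beta+x)$ in the $(c,\beta+1)$-orthonormal basis. One then has to show that the largest eigenvalue of this $(n+1)\times(n+1)$ matrix decreases in $\beta$. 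Two natural routes are a Loewner-order comparison of the matrices at $\beta_{1}<\beta_{2}$, and an envelope-theorem computation
$$\frac{d}{d\beta}\gamma_{n}(c,\beta)^{2}=\frac{1}{\|f^{*}\|_{c,\beta}^{2}}\sum_{k\ge 0}\mu_{k}(\beta)\bigl[\psi(\beta+k)-\psi(\beta)\bigr]\bigl[(\Delta f^{*}(k))^{2}-\gamma_{n}(c,\beta)^{2}f^{*}(k)^{2}\bigr],$$
whose sign one would try to pin down by a Chebyshev-type rearrangement: the weight $\psi(\beta+k)-\psi(\beta)$ is increasing in $k$, and the factor $(\Delta f^{*})^{2}-\gamma_{n}^{2}(f^{*})^{2}$ has $\mu_{\cdot}(\beta)$-mean zero by the extremal relation $\Delta^{*}\Delta f^{*}=\gamma_{n}^{2}f^{*}$, so non-positivity reduces to a single-sign-change property of this factor as $k$ grows. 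Establishing that sign-change property is the crux, and is where I expect the tridiagonal Jacobi-matrix realization to be indispensable.
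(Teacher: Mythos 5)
Parts (iii) and the inequality \eqref{e1.5} of part (i) are fine and essentially reproduce the paper's own argument (triangle inequality plus the index shift $k\mapsto k+1$, which is exactly how Theorem~\ref{Th2} is proved here); your strictness argument via the equality case of the triangle inequality and a leading-coefficient comparison is a legitimate elementary alternative to the paper's, which instead obtains strictness as a byproduct of the quantitative upper bound in \eqref{e5.3}. For the limit \eqref{e1.6} you take a genuinely different route: you compute $\wt{\gamma}(c,1)=1+1/\sqrt{c}$ on sequences and then transfer this to polynomials via density of $\PP$ in $\ell_2(c,1)$. That density is true (the Meixner measure has a finite exponential moment, hence is determinate, hence polynomials are dense in the associated $L^2$ space by M.~Riesz's theorem), but you cannot wave it through as ``standard'': the authors devote Section~7 to precisely this discrete Bernstein approximation problem and treat it as delicate, so you would need to supply the determinacy-plus-Riesz argument explicitly. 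The paper instead proves \eqref{e1.6} quantitatively, identifying $\gamma_n(c,1)$ with $(1/c-1)/\sqrt{\lambda_{\min}}$ for an explicit tridiagonal matrix whose characteristic polynomial is $2^{-n}\big(U_n+\sqrt{c}\,U_{n-1}\big)$; this yields the two-sided bounds \eqref{e5.3} with an $O(n^{-2})$ rate, which your qualitative route does not give.

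The genuine gap is part (ii), which you yourself flag as unfinished. Your envelope-theorem identity for $\frac{d}{d\beta}\gamma_n^2$ is correct, and the Chebyshev-rearrangement reduction to a single-sign-change property of $(\Delta f^{*})^2-\gamma_n^2 (f^{*})^2$ is a sensible idea, but you do not prove that sign-change property, and it is far from clear that it holds; as written, part (ii) is a plan, not a proof. The paper's resolution sidesteps the extremal function entirely: the decisive observation is that the \emph{inverse} matrix $(\mathbf{A_n}\mathbf{A_n^{\top}})^{-1}$ is a Jacobi matrix whose entries are elementary functions of $\beta$ (diagonal entries of the form $1+\frac{\beta+k-1}{kc}$, off-diagonal entries $-\sqrt{\frac{\beta+k}{(k+1)c}}$). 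Differentiating these entries in $\beta$ produces another tridiagonal matrix whose positive definiteness is verified by the Wall--Wetzel chain-sequence criterion (the condition $[\tilde b_{k,k+1}^{\prime}]^2<\frac{1}{4}\,\tilde b_{k,k}^{\prime}\tilde b_{k+1,k+1}^{\prime}$ reduces to $\beta+k(1-c)>0$), and the Hellmann--Feynman theorem then shows that every eigenvalue, in particular $\lambda_{\min}$, increases with $\beta$, so $\gamma_n=(1/c-1)/\sqrt{\lambda_{\min}}$ decreases. Without this inversion step (or a proof of your missing sign-change lemma), part (ii) remains unproved.
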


\begin{remark}\label{rem1}
Inequality \eqref{e1.3} is discrete for two reasons: the Meixner
norm $\,\|\cdot\|_{c,\beta}\,$ is a ``discrete'' one,  and the
derivative is replaced by the forward difference operator.
Theorem~\ref{Th3}(iii) reveals somewhat unusual phenomenon: while
typically the sharp constants in the Markov-Bernstein inequalities
tend to infinity as $n$ grows, here  the sequence
$\{\gamma_n(c,\beta)\}_{n\in\mathbb{N}}$ is bounded.
\end{remark}

Theorem~\ref{Th3}(iii) follows from a Markov-Bernstein inequality
for a wider set of functions. Let $\,\ell_{2}(c,\beta)\,$ be the
Hilbert space of real valued  functions $\,f\,$ defined on
$\,\mathbb{N}_0\,$ (i.e., sequences $f=(f(0), f(1),\ldots))\,$ for
which $\,\|f\|_{c,\beta}<\infty\,$. We prove the following
Markov-Bernstein inequality for sequences in
$\,\ell_{2}(c,\beta)\,$:
\begin{theorem}  \label{Th2} Let $c \in (0,1)$.
\begin{enumerate}[(i)~~]
\item
If $\,\beta\geq 1$, then
\begin{equation}\label{e1.7}
\| \Delta f \|_{c,\beta} \leq   \Big(1+ \frac{1}{\sqrt{c}}\Big)\,
\| f \|_{c,\beta}\,,\qquad f\in \ell_2(c,\beta)
\end{equation}
and for $\,\beta=1\,$ the constant $\,1+\frac{1}{\sqrt{c}}\,$
cannot be replaced by a smaller one.
\item
If $\,0<\beta\leq 1$, then
\begin{equation}\label{e1.8}
\| \Delta f \|_{c,\beta} \leq   \Big(1+
\frac{1}{\sqrt{\beta\,c}}\Big)\, \| f \|_{c,\beta}\,,\qquad f\in
\ell_2(c,\beta)\,.
\end{equation}
\end{enumerate}
\end{theorem}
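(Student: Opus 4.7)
The plan is to exploit the decomposition $\Delta=E-I$, where $E$ is the forward shift operator $Ef(k)=f(k+1)$, and then apply the triangle inequality
\begin{equation*}
\|\Delta f\|_{c,\beta}\leq \|Ef\|_{c,\beta}+\|f\|_{c,\beta}.
\end{equation*}
Both bounds \eqref{e1.7} and \eqref{e1.8} will follow once I show that the operator norm of $E$ on $\ell_2(c,\beta)$ is at most $1/\sqrt{c}$ in case (i) and at most $1/\sqrt{\beta c}$ in case (ii).

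To bound $\|Ef\|_{c,\beta}$ I reindex the defining sum. Setting $j=k+1$ and using the identity $(\beta)_{j-1}/(j-1)!=(j/(\beta+j-1))\,(\beta)_j/j!$, I obtain
\begin{equation*}
\|Ef\|_{c,\beta}^2=\sum_{k=0}^{\infty}\frac{(\beta)_k}{k!}\,c^k\,f(k+1)^2
=\frac{1}{c}\sum_{j=1}^{\infty}\frac{j}{\beta+j-1}\cdot\frac{(\beta)_j}{j!}\,c^j\,f(j)^2.
\end{equation*}
Everything now reduces to the behavior of the factor $j/(\beta+j-1)$ for $j\ge 1$: when $\beta\ge 1$ this factor is bounded by $1$, while for $0<\beta\le 1$ it is a decreasing function of $j$ attaining its maximum $1/\beta$ at $j=1$. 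Comparing the resulting estimate with the series expressing $\|f\|_{c,\beta}^2$ (whose $j=0$ term is nonnegative and can be discarded) gives $\|Ef\|_{c,\beta}^2\le \|f\|_{c,\beta}^2/c$ in case (i) and $\|Ef\|_{c,\beta}^2\le \|f\|_{c,\beta}^2/(\beta c)$ in case (ii). The triangle inequality then delivers \eqref{e1.7} and \eqref{e1.8}.

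For the sharpness claim at $\beta=1$, where $(\beta)_k/k!\equiv 1$ and the norm collapses to the geometric weight $c^k$, I would test against the alternating geometric sequences $f_\lambda(k):=(-\lambda)^k$ for $0<\lambda<1/\sqrt{c}$. Summing a geometric series yields $\|f_\lambda\|_{c,1}^2=(1-c\lambda^2)^{-1}$, and since $Ef_\lambda=-\lambda\,f_\lambda$, we have $\Delta f_\lambda=-(1+\lambda)\,f_\lambda$. Hence
\begin{equation*}
\frac{\|\Delta f_\lambda\|_{c,1}}{\|f_\lambda\|_{c,1}}=1+\lambda\;\longrightarrow\;1+\frac{1}{\sqrt{c}}
\quad\text{as}\quad\lambda\nearrow\frac{1}{\sqrt{c}},
\end{equation*}
which proves optimality. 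The sign alternation in $f_\lambda$ is essential: it forces $Ef_\lambda$ and $f_\lambda$ to be antiparallel, saturating the triangle inequality in the limit.

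The main obstacle is not technical but presentational: one must carefully track the index shift (the $j=0$ term is absent from $\|Ef\|_{c,\beta}^2$ but present in $\|f\|_{c,\beta}^2$, which is exactly what makes the operator norm of $E$ strictly less than $1/\sqrt{c}$ in general, with sharpness recovered only asymptotically via the test sequence). Everything else is routine algebra with the Pochhammer weights.
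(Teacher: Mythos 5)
Your proof is correct, and the upper-bound part is essentially identical to the paper's: the same decomposition $\Delta f=\wt f-f$ with $\wt f(\cdot)=f(\cdot+1)$, the same reindexing producing the factor $j/(\beta+j-1)$, and the same case distinction $\beta\ge 1$ versus $0<\beta\le 1$. The only genuine difference is in the sharpness argument for $\beta=1$. The paper tests against the truncated alternating sequences $f(k)=(-1)^k c^{-k/2}$ for $0\le k\le n$ and $f(k)=0$ for $k>n$, computes $\|f\|_{c,1}^2=n+1$ and $\|\Delta f\|_{c,1}^2>n(1+1/\sqrt c)^2$, and lets $n\to\infty$; you instead use the full geometric sequences $f_\lambda(k)=(-\lambda)^k$ with $\lambda<1/\sqrt c$, which lie in $\ell_2(c,1)$ and are exact eigenvectors of $\Delta$ with eigenvalue $-(1+\lambda)$, so the norm ratio is exactly $1+\lambda$ and you let $\lambda\nearrow 1/\sqrt c$. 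Your variant is slightly cleaner in that no boundary term needs to be controlled, while the paper's truncated sequences have the small advantage of being finitely supported (hence also usable as data for the polynomial problem). Both are perfectly valid; the computations in your version ($\|f_\lambda\|_{c,1}^2=(1-c\lambda^2)^{-1}$, $Ef_\lambda=-\lambda f_\lambda$) check out.
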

Set
$$
\wt{\gamma}(c,\beta):= \sup_{\underset{f\ne 0}{f\in \ell_2(c,\beta)}}\frac{\|\Delta f\|_{c,\beta}}
{\|f\|_{c,\beta}}\,,
$$
then Theorem~\ref{Th2} implies
\begin{equation}\label{e1.9}
\wt{\gamma}(c,\beta)\leq \begin{cases} 1+ \frac{1}{\sqrt{c}}\,,& \beta\geq 1\,,\\
1+ \frac{1}{\sqrt{\beta\,c}}\,,& 0<\beta<1\,.
\end{cases}
\end{equation}
Since $\,\PP_n\subset\ell_2(c,\beta)\,$, we have
$\,\gamma_n(c,\beta)\leq \wt{\gamma}(c,\beta)$, $\,n\in \mathbb{N}$,
hence inequality \eqref{e1.5} in Theorem~\ref{Th3}(i) and
Theorem~\ref{Th3}(iii) are a consequence of \eqref{e1.9}

The rest of the paper is structured as follows. Theorem~\ref{Th2} is
proven in Section~2. In Section~3 we give some properties of the
Meixner polynomials, the orthogonal polynomials with respect to the
Meixner inner product. A relation between the best Markov constants
$\,\gamma_n$, $n\in \mathbb{N}$, and the smallest eigenvalues of
some Jacobi matrices is established in Section~4. It is worth
noticing that this result applies to more general situations,
concerning the sharp constants in a wide class of polynomial
inequalities in $L^2$-norms (see \cite{AN2016, NS2019}). In
Section~5 we obtain two-sided estimates for $\,\gamma_n(c,1)\,$
which complete the proof of Theorem~\ref{Th3}~(i). In Section 6 we
apply the Hellmann-Feynman theorem to prove Theorem~\ref{Th3}~(ii).
Section~7 contains some comments.
\section{Proof of Theorem~\ref{Th2}}
Set $\,\wt{f}(\cdot):=f(\cdot+1)$, then by the triangle inequality
\begin{equation}\label{e4}
\|\Delta f\|_{c,\beta}=\|\wt{f}-f\|_{c,\beta}\leq
\|\wt{f}\|_{c,\beta}+\| f \|_{c,\beta}\,.
\end{equation}
We have
$$
\|\wt{f}\|_{c,\beta}^2=
\sum_{k=0}^{\infty}c^{k}\frac{(\beta)_{k}}{(k)!}[f(k+1)]^2=
\frac{1}{c}\,\sum_{k=1}^{\infty}
c^{k}\frac{(\beta)_{k}}{k)!}[f(k)]^2\,\frac{k}{k-1+\beta}\,.
$$
Since
$$
\frac{k}{k-1+\beta}\leq \begin{cases} 1,& \beta\geq 1\vspace*{2ex}\\
\ds{\frac{1}{\beta}},& 0<\beta\leq 1
\end{cases}
$$
for every $\,k\in \mathbb{N}$, we conclude that
$$
\|\tilde{f}\|_{c,\beta}\leq \begin{cases}\ds{\frac{\|
f\|_{c,\beta}}{\sqrt{c}}},& \beta\geq 1\vspace*{1ex}\\
\ds{\frac{\| f\|_{c,\beta}}{\sqrt{\beta\,c}}},& 0<\beta\leq 1\,.
\end{cases}
$$
By substituting these upper bounds for $\|\wt{f}\|_{c,\beta}$ in the
right-hand side of \eqref{e4} we obtain inequalities \eqref{e1.7}
and \eqref{e1.8}.

It remains to prove the sharpness of the constant $\,1+1/\sqrt{c}\,$
in the case $\beta=1$. For an arbitrary fixed $\,n\in \mathbb{N}\,$
we consider the sequence
$$
f(k)=\begin{cases} \ds{\frac{(-1)^k}{c^{k/2}}}\,,& 0\leq k\leq n
\vspace*{2mm}\\
0\,,& k>n\,.
\end{cases}
$$
We have $\,\| f \|_{c,1}^2=n+1\,$ and
$$
\Delta
f(k)=\begin{cases}\ds{\frac{(-1)^{k+1}}{c^{k/2}}\,
\Big(1+\frac{1}{\sqrt{c}}\Big)}\,,
& 0\leq k\leq n-1 \vspace*{2mm}\\
\ds{\frac{(-1)^{n+1}}{c^{n/2}}}\,,& k=n\vspace*{2mm}\\
0\,,& k>n\,.\end{cases}
$$
Consequently,
$$
\|\Delta f \|_{c,1}^2
=\sum_{k=0}^{n-1}\Big(1+\frac{1}{\sqrt{c}}\Big)^2+1
>n\,\Big(1+\frac{1}{\sqrt{c}}\Big)^2\,.
$$
Hence,
$$
\|\Delta f \|_{c,1}> \Big(\frac{n}{n+1}\Big)^{1/2}\,
\Big(1+\frac{1}{\sqrt{c}}\Big)\,\| f\|_{c,1}
$$
and
$$
\wt{\gamma}(c,1)\geq\lim_{n\to\infty}\Big(\frac{n}{n+1}\Big)^{1/2}\,
\Big(1+\frac{1}{\sqrt{c}}\Big)=1+\frac{1}{\sqrt{c}}\,.
$$
This inequality and \eqref{e1.9} with $\,\beta=1\,$ imply
$\,\wt{\gamma}(c,1)=1+1/\sqrt{c}\,$.
\section{Meixner polynomials}
For any pair of parameters $\,(\beta,c)\,$ such that $\beta>0$ and
$c\in (0,1)$, the Meixner inner product and norm are defined by
\begin{equation*}
\langle f,g\rangle=\langle f,g\rangle_{c,\beta}
:=\sum_{k=0}^{\infty}\frac{(\beta)_k}{k!}\,c^k\,f(k)\,g(k),\qquad
\|f\|_{c,\beta}=\langle f,f\rangle_{c,\beta}^{1/2}\,.
\end{equation*}

The induced Hilbert space $\,\ell_2(c,\beta)=\{f\;:\;
\|f\|_{c,\beta}<\infty\}\,$ contains $\,\PP\,$ and the corresponding
orthogonal polynomials are the Meixner polynomials
$\{M_n(\cdot;\beta,c)\}_{n\in \mathbb{N}_0}$, defined by
$$
M_n(x;\beta,c):=\, _2F_1\Big(\begin{array}{c}-n,-x \\
\beta\end{array}\Big|\,\ds{1-\frac{1}{c}}\Big)\,.
$$
Here, $_2F_1$ is the hypergeometric function,
$$
_2F_1\Big(\begin{array}{c} p,q \vspace*{-1ex}\\
r\end{array}\Big|\,t\Big) = \sum_{k=0}^{\infty}\frac{(p)_k
(q)_k}{(r)_k}\,\frac{t^{k}}{k!}.
$$
 In the following lemma we collect some properties of Meixner
polynomials.
\begin{lemma}\label{l2.1}
The following are properties of Meixner polynomials:
\begin{enumerate}[(i)~~]
\item Orthogonality:
$$
\langle M_m,M_n\rangle:=\sum_{x=0}^{\infty}\frac{(\beta)_x}{x!}
\,c^x\,M_m(x;\beta,c)M_n(x;\beta,c)
=\frac{c^{-n}\,n!}{(\beta)_n(1-c)^{\beta}}\,\delta_{m,n}\,,\quad
m,n\in \mathbb{N}_0\,;
$$
\item Forward shift operator identity:
$$
\Delta
M_n(x;\beta,c):=M_n(x+1;\beta,c)-M_n(x;\beta,c)=\frac{n}{\beta}\,\frac{c-1}{c}\,
M_{n-1}(x;\beta+1,c)\,;
$$
\item  Recurrence relation:
$$
(n+\beta)M_n(x;\beta+1,c)=\beta\,M_n(x;\beta,c)+n\,M_{n-1}(x;\beta+1,c);
$$
\item  Expansion formula:
$$
M_n(x;\beta+1,c)=\frac{n!}{(\beta+1)_n}\,\sum_{k=0}^{n}\frac{(\beta)_k}{k!}
\,M_{k}(x;\beta,c)\,,\quad n\in \mathbb{N}_0\,.
$$
\end{enumerate}
\end{lemma}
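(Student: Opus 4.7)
The plan is to derive all four identities directly from the hypergeometric series
\[
M_n(x;\beta,c) = \sum_{k=0}^n \frac{(-n)_k\,(-x)_k}{(\beta)_k\,k!}\Big(1-\frac{1}{c}\Big)^k,
\]
using two elementary ingredients: the telescoping identity $\Delta_x(-x)_k = -k(-x)_{k-1}$ (verified by factoring $(-x)(-x+1)\cdots(-x+k-2)$ out of $(-x-1)_k - (-x)_k$), and the Pochhammer identity $(\beta)_k/(\beta+1)_k = \beta/(\beta+k)$.

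For (ii) I would apply $\Delta_x$ termwise, then reindex $k \mapsto k+1$ and use $(-n)_{k+1} = -n\,(-(n-1))_k$ together with $(\beta)_{k+1} = \beta(\beta+1)_k$; the extra factor $(1-1/c)=(c-1)/c$ combines with $-n/\beta$ to produce exactly $\frac{n(c-1)}{\beta c}M_{n-1}(x;\beta+1,c)$. For (iii) I would form the combination $(n+\beta)M_n(x;\beta+1,c) - \beta M_n(x;\beta,c)$ termwise; the bracketed coefficient $\frac{n+\beta}{(\beta+1)_k} - \frac{\beta}{(\beta)_k}$ collapses to $\frac{n-k}{(\beta+1)_k}$ by the Pochhammer identity, and then $(-n)_k(n-k) = -(-n)_{k+1} = n\,(-(n-1))_k$ reassembles the series into $n\,M_{n-1}(x;\beta+1,c)$. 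For (iv) I would induct on $n$: rewriting (iii) as $M_n(x;\beta+1,c) = \frac{\beta}{n+\beta}M_n(x;\beta,c) + \frac{n}{n+\beta}M_{n-1}(x;\beta+1,c)$ and inserting the inductive hypothesis, the coefficients of $M_k(x;\beta,c)$ for $k<n$ match because $\frac{n}{n+\beta}\cdot\frac{(n-1)!}{(\beta+1)_{n-1}} = \frac{n!}{(\beta+1)_n}$, while the $k=n$ term contributes $\frac{n!}{(\beta+1)_n}\cdot\frac{(\beta)_n}{n!} = \frac{\beta}{n+\beta}$.

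The substantive step is (i), for which I would use the generating function
\[
G(x,t) := \Big(1-\frac{t}{c}\Big)^x (1-t)^{-x-\beta} = \sum_{n=0}^\infty \frac{(\beta)_n}{n!}\,M_n(x;\beta,c)\,t^n.
\]
The identification on the right is verified by expanding both factors by the binomial series, collecting the coefficient of $t^n$, and comparing with the hypergeometric definition (a short Vandermonde--Chu step). Interchanging the order of summation and summing the inner series by the negative binomial theorem yields
\[
\langle G(\cdot,s),G(\cdot,t)\rangle = (1-s)^{-\beta}(1-t)^{-\beta}\Big[1-\frac{c(1-s/c)(1-t/c)}{(1-s)(1-t)}\Big]^{-\beta}.
\]
The key algebraic simplification
\[
(1-s)(1-t) - c(1-s/c)(1-t/c) = (1-c)\Big(1-\frac{st}{c}\Big)
\]
then collapses the right-hand side to $(1-c)^{-\beta}(1-st/c)^{-\beta}$. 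Expanding this in powers of $s^m t^n$ and matching with $\sum_{m,n}\frac{(\beta)_m(\beta)_n}{m!\,n!}\langle M_m,M_n\rangle\,s^m t^n$ extracts in one stroke both $\langle M_m,M_n\rangle = 0$ for $m\ne n$ and the explicit norm $c^{-n}n!/[(\beta)_n(1-c)^\beta]$.

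The main obstacle is purely computational and sits entirely inside (i): verifying the generating-function identity, and then the factorization displayed above, which is what allows the double series to close up. Parts (ii), (iii), (iv) are short exercises in Pochhammer bookkeeping, each fitting on a single line once the two basic identities above are in hand.
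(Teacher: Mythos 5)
Your treatment of (iii) and (iv) is essentially identical to the paper's: the paper also proves (iii) by writing out the two hypergeometric series and forming $(n+\beta)M_n(x;\beta+1,c)-\beta\,M_n(x;\beta,c)$ termwise, and proves (iv) by induction on $n$ using (iii), with the same coefficient check $\tfrac{n+1}{n+1+\beta}\cdot\tfrac{n!}{(\beta+1)_n}=\tfrac{(n+1)!}{(\beta+1)_{n+1}}$. Where you genuinely diverge is on (i) and (ii): the paper simply cites these as known (Koekoek--Swarttouw, (1.9.2) and (1.9.6)), whereas you supply proofs. Your proof of (ii) via $\Delta_x(-x)_k=-k\,(-x)_{k-1}$, reindexing, and $(-n)_{k+1}=-n\,(-(n-1))_k$, $(\beta)_{k+1}=\beta(\beta+1)_k$ is correct. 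Your proof of (i) via the generating function $(1-t/c)^x(1-t)^{-x-\beta}$, the negative binomial theorem, and the factorization $(1-s)(1-t)-c(1-s/c)(1-t/c)=(1-c)\bigl(1-\tfrac{st}{c}\bigr)$ is also correct; the inner series over $x$ has ratio equal to $c<1$ at $s=t=0$, so the interchange of summation and the extraction of coefficients of $s^mt^n$ are justified for $s,t$ in a neighbourhood of the origin. What your route buys is self-containedness; what it costs is the Vandermonde--Chu verification of the generating-function identity, which you correctly identify as the one substantive computation. The paper's route is shorter because it delegates precisely those two classical facts to the literature and only proves the less standard identities (iii) and (iv).
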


\begin{proof} Properties (i) and (ii) are well known, see, e.g., \cite[ (1.9.2),
(1.9.6)]{KS1998}. For the proof of property (iii), we write, with
$z=1-1/c$, the formulae for $M_n(x;\beta+1,c)$ and $M_n(x;\beta,c)$:
\begin{eqnarray*}
&&M_n(x;\beta+1,c)=1+\sum_{k=1}^{n}\binom{n}{k}\,\frac{x(x-1)\cdots(x-k+1)}
{(\beta+1)(\beta+2)\cdots(\beta+k)}\,z^k\,,\\
&&M_n(x;\beta,c)=1+\sum_{k=1}^{n}\binom{n}{k}\,\frac{x(x-1)\cdots(x-k+1)}
{\beta(\beta+1)\cdots(\beta+k-1)}\,z^k\,.
\end{eqnarray*}
Subtracting the second equality multiplied by $\beta$ from the first
one multiplied by $n+\beta$, we obtain the result.

The proof of property (iv) is by induction with respect to $n$.
Obviously, the equality holds for $n=0$, and we assume it is true
for some $n\in \mathbb{N}_0$. Property (iii) and the inductional
hypothesis then imply
\[
\begin{split}
M_{n+1}(x;\beta+1,c)&=\frac{\beta}{n+1+\beta}\,M_{n+1}(x;\beta,c)
+\frac{n+1}{n+1+\beta}\,M_{n}(x;\beta+1,c)\\
&=\frac{\beta}{n+1+\beta}\,M_{n+1}(x;\beta,c)+
\frac{(n+1)!}{(\beta+1)_{n+1}}\,
\sum_{k=0}^{n}\frac{(\beta)_k}{k!} \,M_{k}(x;\beta,c)\\
&=\frac{(n+1)!}{(\beta+1)_{n+1}}\,
\sum_{k=0}^{n+1}\frac{(\beta)_k}{k!} \,M_{k}(x;\beta,c)\,,
\end{split}
\]
which accomplishes the induction step.
\end{proof}

In view of Lemma~\ref{l2.1}\,(i), the orthonormal Meixner
polynomials $\{p_m\}_{m\in \mathbb{N}_0}$ are given by
\begin{equation}\label{e2.1}
p_m(x;\beta,c) :=(1-c)^{\frac{\beta}{2}}\,c^{\frac{m}{2}}\,
\sqrt{\frac{(\beta)_m}{m!}}\,M_m(x;\beta,c)\,.
\end{equation}

The \emph{forward shift operator} of the orthonormal Meixner
polynomials obeys the following representation:
\begin{lemma}\label{l2.2}
For any $m\in \mathbb{N}$,
$$
\Delta p_m(x;\beta,c)=
\frac{c-1}{c}\,\sum_{k=0}^{m-1}\frac{\alpha_k}{\alpha_m}\,p_k(x;\beta,c)\,,
$$
where
\begin{equation}\label{e2.2}
\alpha_k:=c^{-\frac{k}{2}}\,\sqrt{\frac{(\beta)_k}{k!}}\,.
\end{equation}
\end{lemma}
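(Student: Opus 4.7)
The plan is to chain together the ingredients already prepared in Lemma~\ref{l2.1}, converting between the monic-style Meixner polynomials $M_m$ and the orthonormal ones $p_m$ via the normalization formula \eqref{e2.1}. The three pieces that do all the work are: the forward-shift identity (Lemma~\ref{l2.1}(ii)), the expansion formula (Lemma~\ref{l2.1}(iv)), and the relation $(\beta+1)_{m-1}=(\beta)_m/\beta$ from the definition of the Pochhammer symbol.

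First I would apply $\Delta$ to both sides of \eqref{e2.1} (note that $\Delta$ acts only on the variable $x$ and the prefactor is independent of $x$), and then use Lemma~\ref{l2.1}(ii) to obtain
\[
\Delta p_m(x;\beta,c)=(1-c)^{\beta/2}\,c^{m/2}\sqrt{\frac{(\beta)_m}{m!}}\,\frac{m}{\beta}\,\frac{c-1}{c}\,M_{m-1}(x;\beta+1,c).
\]
Next I would use Lemma~\ref{l2.1}(iv) to expand $M_{m-1}(x;\beta+1,c)$ in the basis $\{M_k(x;\beta,c)\}_{k=0}^{m-1}$, and then invert \eqref{e2.1} to rewrite each $M_k(x;\beta,c)$ as a scalar multiple of $p_k(x;\beta,c)$:
\[
M_k(x;\beta,c)=(1-c)^{-\beta/2}\,c^{-k/2}\sqrt{\frac{k!}{(\beta)_k}}\,p_k(x;\beta,c).
\]
Substituting this into the expanded expression for $\Delta p_m$ produces a sum $\sum_{k=0}^{m-1} \lambda_{m,k}\, p_k(x;\beta,c)$, with the $(1-c)^{\beta/2}$ factors conveniently cancelling out.

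The final step is bookkeeping: collect the scalar $\lambda_{m,k}$. The coefficient of $p_k$ carries the factor $\frac{(\beta)_k}{k!}\cdot c^{-k/2}\sqrt{k!/(\beta)_k}=c^{-k/2}\sqrt{(\beta)_k/k!}=\alpha_k$, exactly as defined in \eqref{e2.2}. The $k$-independent prefactor collects to
\[
c^{m/2}\sqrt{\frac{(\beta)_m}{m!}}\,\frac{m}{\beta}\,\frac{c-1}{c}\,\frac{(m-1)!}{(\beta+1)_{m-1}}=\frac{c-1}{c}\cdot c^{m/2}\sqrt{\frac{m!}{(\beta)_m}}=\frac{c-1}{c}\cdot\frac{1}{\alpha_m},
\]
where I used $(\beta+1)_{m-1}=(\beta)_m/\beta$ to simplify $\frac{m}{\beta}\cdot\frac{(m-1)!}{(\beta+1)_{m-1}}=\frac{m!}{(\beta)_m}$. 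This yields exactly the claimed formula.

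There is no real obstacle here; the lemma is essentially a direct consequence of properties (ii) and (iv) of Lemma~\ref{l2.1} after renormalization. The only thing to be careful about is tracking the Pochhammer identity $(\beta+1)_{m-1}=(\beta)_m/\beta$, which is what causes the ratio $\alpha_k/\alpha_m$ to emerge cleanly.
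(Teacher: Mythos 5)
Your proposal is correct and follows essentially the same route as the paper: both combine Lemma~\ref{l2.1}(ii) and (iv), use $(\beta)_m=\beta(\beta+1)_{m-1}$ to simplify the prefactor to $\frac{c-1}{c}\frac{m!}{(\beta)_m}$, and then convert the $M_k$ back to $p_k$ via \eqref{e2.1} to read off the ratio $\alpha_k/\alpha_m$. The only difference is cosmetic (you normalize $M_m$ to $p_m$ before applying $\Delta$, the paper does it at the end), and your coefficient bookkeeping checks out.
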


\begin{proof}
From Lemma~\ref{l2.1}(ii), (iv) we have
\[
\begin{split}
\Delta\,M_m(x;\beta,c)&=\frac{m}{\beta}\,\frac{c-1}{c}\,
\frac{(m-1)!}{(\beta+1)_{m-1}}\,\sum_{k=0}^{m-1}
\frac{(\beta)_k}{k!}\,M_k(x;\beta,c)\\
&=\frac{c-1}{c}\,\frac{m!}{(\beta)_m}\,\sum_{k=0}^{m-1}
\frac{(\beta)_k}{k!}\,M_k(x;\beta,c)\,,
\end{split}
\]
or, equivalently,
$$
\frac{(\beta)_m}{m!}\,\Delta\,M_m(x;\beta,c)
=\frac{c-1}{c}\,\sum_{k=0}^{m-1}
\frac{(\beta)_k}{k!}\,M_k(x;\beta,c)\,.
$$
In this identity we replace $M_k(x;\beta,c)$ by
$$
M_k(x;\beta,c)=c^{-\frac{k}{2}}(1-c)^{-\frac{\beta}{2}}
\sqrt{\frac{k!}{(\beta)_k}}\,p_k(x;\beta,c)\,,\quad k=0,\ldots,m,
$$
and deduce the desired representation.
\end{proof}

\section{Best Markov constants and extreme eigenvalues of Jacobi matrices}

In seeking for the best Markov constant
\begin{equation}\label{e3.1}
\gamma_n(c,\beta):=\sup\{\Vert \Delta f\Vert_{c,\beta}\,:\, f\in\PP_n,\
\Vert f\Vert_{c,\beta}=1\}\,,
\end{equation}
we may assume without loss of generality that
$$
f=t_1 p_1+t_2 p_2+\cdots +t_n p_n
=\mathbf{t}^{\top}\mathbf{p}_1\,,\qquad \Vert f\Vert=1=\Vert
\mathbf{t}\Vert =(t_1^2+\cdots+t_n^2)^{1/2}=1\,,
$$
with $\mathbf{t}^{\top}=(t_1,t_2,\ldots,t_n)\in \mathbb{R}^n$ and
$\mathbf{p}_1^{\top}=(p_1,p_2,\ldots,p_n)$. Indeed, since
$\Delta\,p_0=0$, $p_0$ cannot yield an increase of $\Vert \Delta
f\Vert_{c,\beta}$.\smallskip

According to Lemma~\ref{l2.2}, we have
$$
\Delta \mathbf{p}_1=\frac{c-1}{c}\,\mathbf{A_n}\,\mathbf{p}_0\,
$$
where $\mathbf{p}_0^{\top}:=(p_0,p_1,\ldots,p_{n-1})$ and
\begin{equation}\label{e3.2}
\mathbf{A_n}:=\begin{pmatrix}
\frac{\alpha_0}{\alpha_1}&0&0&\cdot\cdots&0\\
\frac{\alpha_0}{\alpha_2} &\frac{\alpha_1}{\alpha_2}&0&\cdots&0\\
\frac{\alpha_0}{\alpha_3} &\frac{\alpha_1}{\alpha_3} &
\frac{\alpha_2}{\alpha_3} & \cdots&0\\
\vdots&\vdots&\vdots&\ddots&\vdots\\
\frac{\alpha_0}{\alpha_n} & \frac{\alpha_1}{\alpha_n}
&\frac{\alpha_2}{\alpha_n} & \cdots& \frac{\alpha_{n-1}}{\alpha_n}
\end{pmatrix}\,.
\end{equation}

Hence, $\Delta f=(1-1/c)\,\mathbf{t}^{\top}\Delta
\mathbf{p}_1=(1-1/c)\,\mathbf{t}^{\top} \mathbf{A_n}\,\mathbf{p}_0$,
and
$$
\Vert \Delta f\Vert_{c,\beta}^2 =(1-1/c)^2\Vert \mathbf{t}^{\top}
\mathbf{A_n}\Vert^2=(1-1/c)^2\langle
\mathbf{A_n^\top}\mathbf{t}, \mathbf{A_n^{\top}} \mathbf{t}\rangle
=(1-1/c)^2\langle
\mathbf{A_n}\mathbf{A_n^\top}\mathbf{t},\mathbf{t}\rangle.
$$
Therefore,
\begin{equation}\label{e3.3}
\gamma_n^2(c,\beta)=(1-1/c)^2\, \sup_{\Vert
\mathbf{t}\Vert=1}\langle
\mathbf{A_n}\mathbf{A_n^{\top}}\mathbf{t},\mathbf{t}\rangle=
(1-1/c)^2\,\mu_{\max}(c,\beta),
\end{equation}
where $\mu_{\max}=\mu_{\max}(c,\beta)$ is the largest eigenvalue of
the positive definite matrix $\mathbf{A_n}\mathbf{A_n^{\top}}$.

Since $\,\mathbf{A_n^{\top}}\mathbf{A_n}=
\mathbf{A_n}^{-1}(\mathbf{A_n}\mathbf{A_n^{\top}})\mathbf{A_n}\,$,
$\,\mathbf{A_n^{\top}}\mathbf{A_n}\sim
\mathbf{A_n}\mathbf{A_n^{\top}}$, and therefore $\,\mu_{\max}\,$ is
also the largest eigenvalue of the positive definite matrix
$\,\mathbf{A_n^{\top}}\mathbf{A_n}\,$.

It turns out that it is advantageous to work with the inverse
matrices
$$
\mathbf{B_n}=(\mathbf{A_n}\mathbf{A_n^{\top}})^{-1}\,,\qquad
\mathbf{C_n}=(\mathbf{A_n^{\top}}\mathbf{A_n})^{-1}\,,
$$
as we shall show that they are Jacobi matrices.

Let us we find the explicit form of $\mathbf{B_n}$ and
$\mathbf{C_n}$. The matrix $\mathbf{A_n}$ in \eqref{e3.1} can be
represented in the form
\begin{equation}\label{e3.4}
\mathbf{A_n}={\rm diag}\{\alpha_k^{-1}\}\, \mathbf{T_n}\, {\rm
diag}\big\{\alpha_{k-1}\big\}\,,
\end{equation}
where ${\rm diag}\{\alpha_k^{-1}\}$ and ${\rm diag}\{\alpha_{k-1}\}$
are diagonal $n\times n$ matrices with entries on the main diagonal
$(1/\alpha_1,\ldots,1/\alpha_n)$ and
$(\alpha_0,\alpha_1,\ldots,\alpha_{n-1})$, respectively, and
$\mathbf{T_n}$ is an $n\times n$ triangular matrix with entries
$t_{i,j}=1$, if $i\ge j$, and $t_{i,j}=0$, otherwise.

The matrices $\mathbf{T_n}^{-1}$ and $(\mathbf{T_n^{\top}})^{-1}$
are two-diagonal, namely the only nonzero entries of
$\mathbf{T_n}^{-1}$ are $t^{-1}_{k,k}=1$, $k=1,\ldots,n$, and
$t^{-1}_{k+1,k}=-1$, $k=1,\ldots,n-1$. It follows from \eqref{e3.4}
that
$$
\mathbf{B_n}=(\mathbf{A_n}\mathbf{A_n^{\top}})^{-1}={\rm
diag}\{\alpha_k\}\,(\mathbf{T_n^{\top}})^{-1}\,{\rm
diag}\{\alpha_{k-1}^{-2}\} \,\mathbf{T_n}^{-1}\,  {\rm
diag}\{\alpha_k\}\,,
$$
and using the explicit form of $\mathbf{T_n}^{-1}$ and
$(\mathbf{T_n^{\top}})^{-1}$, we conclude that $\mathbf{B_n}$ is a tridiagonal matrix
whose diagonal entries are $b_{k,k}=1+ \alpha_k^2/\alpha_{k-1}^2$, $k=1,\ldots,n-1$, and $b_{n,n}=\alpha_n^2/\alpha_{n-1}^2$,
while the off-diagonal ones are $b_{k,k+1}=b_{k+1,k}=-\alpha_{k+1}/\alpha_k$, $k=1,\ldots, n-1$.

In a similar manner, \eqref{e3.4} implies
$$
\mathbf{C_n}=(\mathbf{A_n^{\top}}\mathbf{A_n})^{-1}={\rm
diag}\{\alpha_{k-1}^{-1}\}\, \mathbf{T_n}^{-1}\,{\rm
diag}\{\alpha_{k}^{2}\}\,(\mathbf{T_n^{\top}})^{-1}\,{\rm
diag}\{\alpha_{k-1}^{-1}\}\,
$$
so that that $\mathbf{C_n}$ is a tridiagonal matrix
whose diagonal entries are $c_{1,1}=\alpha_1^2/\alpha_{0}^2$ and $c_{k,k}=1+ \alpha_k^2/\alpha_{k-1}^2$, $k=2,\ldots,n$,
and the off-diagonal ones are $c_{k,k+1}=c_{k+1,k}=-\alpha_{k}/\alpha_{k-1}$, $k=1,\ldots, n-1$.
Replacement of the explicit values of $\alpha_k$ from \eqref{e2.2} yields
\begin{equation}\label{e3.5}
\mathbf{B_n}=\begin{pmatrix} \frac{\beta}{c}+1 &
-\sqrt{\frac{\beta+1}{2c}} & 0 & & \cdots & 0 & 0 \\
-\sqrt{\frac{\beta+1}{2c}} & \frac{\beta+1}{2c}+1 &
-\sqrt{\frac{\beta+2}{3c}} & & \cdots & 0 & 0 \\ 0 &
-\sqrt{\frac{\beta+2}{3c}} & \frac{\beta+2}{3c}+1 & &
\cdots & 0 & 0 \\ & & & & & \\
\vdots & \vdots & \vdots  & & \ddots & \vdots & \vdots\\
& & & & & \\
0 & 0 & 0  & & \cdots & \frac{\beta+n-2}{(n-1)c}+1 &
-\sqrt{\frac{\beta+n-1}{n\,c}} \\ 0 & 0 & 0  & & \cdots &
-\sqrt{\frac{\beta+n-1}{n\,c}} & \frac{\beta+n-1}{n\,c}
\end{pmatrix},
\end{equation}
\begin{equation}\label{e3.6}
\mathbf{C_n}=\begin{pmatrix} \frac{\beta}{c} &
-\sqrt{\frac{\beta}{c}} & 0 & & \cdots & 0 & 0 \\
-\sqrt{\frac{\beta}{c}} & \frac{\beta+1}{2c}+1 &
-\sqrt{\frac{\beta+1}{2c}} & & \cdots & 0 & 0 \\ 0 &
-\sqrt{\frac{\beta+1}{2c}} & \frac{\beta+2}{3c}+1 & &
\cdots & 0 & 0 \\ & & & & & \\
\vdots & \vdots & \vdots  & & \ddots & \vdots & \vdots\\
& & & & & \\
0 & 0 & 0  & & \cdots & \frac{\beta+n-2}{(n-1)c}+1 &
-\sqrt{\frac{\beta+n-2}{(n-1)c}} \\ 0 & 0 & 0  & & \cdots &
-\sqrt{\frac{\beta+n-2}{(n-1)c}} & \frac{\beta+n-1}{n\,c}+1
\end{pmatrix}
\end{equation}

Let $\mathbf{\tilde{B}_n}$ and $\mathbf{\tilde{C}_n}$ be the
corresponding Jacobi matrices whose diagonal entries coincide with
those of $\mathbf{B_n}$ and $\mathbf{C_n}$ but the off-diagonal ones
are opposite to those of $\mathbf{B_n}$ and $\mathbf{C_n}$. Then
obviously the eigenvalues of  $\mathbf{B_n}$ and
$\mathbf{\tilde{B}_n}$ coincide and those of $\mathbf{C_n}$ and
$\mathbf{\tilde{C}_n}$  also do.

Since $\mu_{\max}=1/\lambda_{\min}$, where $\lambda_{\min}$ is the
smallest eigenvalue of either of  the matrices $\mathbf{B_n}$,
$\mathbf{\tilde{B}_n}$,  $\mathbf{C_n}$ and $\mathbf{\tilde{C}_n}$,
\eqref{e3.3} yields the following
\begin{theorem}\label{t3.1}
The best constant $\gamma_n(c,\beta)$ in the Markov-Bernstein inequality
$$
\Vert \Delta\,p\Vert_{c,\beta}\leq \gamma_n\,\Vert p\Vert_{c,\beta}\,,\qquad
p\in\PP_n
$$
admits the representation
\begin{equation}\label{e3.7}
\gamma_n(c,\beta)=\frac{1/c-1}{\sqrt{\lambda_{\min}(c,\beta)}}\,,
\end{equation}
where $\lambda_{\min}(c,\beta)>0$ is the smallest eigenvalue of
either of the matrices $\mathbf{B_n}$, $\mathbf{\tilde{B}_n}$, $\mathbf{C_n}$ and $\mathbf{\tilde{C}_n}$.
\end{theorem}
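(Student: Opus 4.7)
The plan is to glue together the observations already assembled in Section~4 above; Theorem~\ref{t3.1} is essentially a repackaging of identity \eqref{e3.3} plus three standard linear-algebra facts. First I would invoke \eqref{e3.3}, namely $\gamma_n^2(c,\beta)=(1-1/c)^2\mu_{\max}(c,\beta)$, and recall that $\mathbf{A_n}$ is a lower triangular matrix with strictly positive diagonal entries $\alpha_{k-1}/\alpha_k$, so it is invertible. Consequently $\mathbf{A_n}\mathbf{A_n^{\top}}$ is positive definite, its inverse $\mathbf{B_n}$ exists, and the spectra of the two matrices are reciprocals of each other. In particular $\mu_{\max}(\mathbf{A_n}\mathbf{A_n^{\top}})=1/\lambda_{\min}(\mathbf{B_n})$, and since $c\in(0,1)$ implies $1/c-1>0$, taking positive square roots in \eqref{e3.3} already yields the desired formula \eqref{e3.7} with $\lambda_{\min}=\lambda_{\min}(\mathbf{B_n})$.

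Next I would justify why $\mathbf{C_n}$ can be used in place of $\mathbf{B_n}$. The similarity $\mathbf{A_n^{\top}}\mathbf{A_n}=\mathbf{A_n}^{-1}(\mathbf{A_n}\mathbf{A_n^{\top}})\mathbf{A_n}$ (already noted in the text) shows that $\mathbf{A_n}\mathbf{A_n^{\top}}$ and $\mathbf{A_n^{\top}}\mathbf{A_n}$ share the same spectrum, hence so do their inverses $\mathbf{B_n}$ and $\mathbf{C_n}$. Finally, to incorporate the sign-flipped partners $\mathbf{\tilde{B}_n}$ and $\mathbf{\tilde{C}_n}$, I would invoke the standard diagonal conjugation trick: with the signature matrix $D=\mathrm{diag}(1,-1,1,-1,\ldots)$, one has $D\mathbf{B_n}D^{-1}=\mathbf{\tilde{B}_n}$ (the diagonal is preserved and each off-diagonal entry changes sign), and similarly $D\mathbf{C_n}D^{-1}=\mathbf{\tilde{C}_n}$; these similarities imply equality of spectra.

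Stringing these three observations together, the common value $\lambda_{\min}(c,\beta)>0$ of the smallest eigenvalue of any of $\mathbf{B_n},\mathbf{\tilde{B}_n},\mathbf{C_n},\mathbf{\tilde{C}_n}$ equals $1/\mu_{\max}(c,\beta)$, and substituting into \eqref{e3.3} gives \eqref{e3.7}. I do not foresee a genuine obstacle: the bulk of the work — computing $\mathbf{A_n}$ from Lemma~\ref{l2.2}, reducing the extremal problem to the spectral radius of $\mathbf{A_n}\mathbf{A_n^{\top}}$, and identifying the tridiagonal inverses $\mathbf{B_n},\mathbf{C_n}$ — has already been carried out. The only items requiring a line or two of justification are the invertibility of $\mathbf{A_n}$ (triangular with nonzero diagonal) and the diagonal conjugation producing the tilded matrices; both are standard and introduce no technical difficulty.
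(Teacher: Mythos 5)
Your proposal is correct and follows the paper's own route: it combines the identity $\gamma_n^2=(1-1/c)^2\mu_{\max}$ with the similarity $\mathbf{A_n^{\top}}\mathbf{A_n}=\mathbf{A_n}^{-1}(\mathbf{A_n}\mathbf{A_n^{\top}})\mathbf{A_n}$ and the reciprocity of spectra under inversion, exactly as in Section~4. The only point you make explicit that the paper leaves as ``obvious'' is the signature-matrix conjugation $D\mathbf{B_n}D^{-1}=\mathbf{\tilde{B}_n}$, which is the standard and correct justification.
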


As is well-known, every $n\times n$ Jacobi matrix
$\mathbf{J_n}$ defines through a three term recurrence relation a
sequence of orthonormal polynomials $\{P_m\}_{m=0}^{n}$, and the
zeros of $P_n$ are the eigenvalues of $\mathbf{J_n}$. We therefore
may reformulate Theorem~\ref{t3.1} as
\medskip

\noindent {\bf Theorem 4.1$^{\prime}$~~} {\it The best Markov
constant $\gamma_n(c,\beta)$ admits the representation \eqref{e3.7},
where $\lambda_{\min}(c,\beta)$ is the smallest zero of the $n$-th
polynomial  $P_n=P_n(c,\beta;\cdot)$ in the sequence of polynomials
defined recursively by
\begin{eqnarray*}
&& P_{0}(x)=1\,,\ \ P_{1}(x)=x-\frac{\beta}{c}\,,\\
&&P_k(x)=\Big(x-\frac{\beta+k-1}{k\,c}-1\Big)\,P_{k-1}(x)
-\frac{\beta+k-2}{(k-1)c}\,P_{k-2}(x)\,,\quad k\geq 2\,.
\end{eqnarray*}}
Since $\beta>0$ and $c\in (0,1)$, it follows from Favard's theorem
that $\{P_k\}_{k\in \mathbb{N}_0}$ form a system of orthogonal
polynomials.
\section{Two-sided estimates for $\,\gamma(c,1)$}
Matrices $\mathbf{B}=\mathbf{B_n}(c,\beta)$ and
$\mathbf{C}=\mathbf{C_n}(c,\beta)$ have particularly simple form in
the case $\beta=1$, for instance,
$$
\mathbf{D_n} := \mathbf{C_n}(c,1)=\begin{pmatrix} \frac{1}{c} &
-\frac{1}{\sqrt{\,c}} & 0 & & \cdots & 0 & 0 \\
-\frac{1}{\sqrt{\,c}} & \frac{1}{c}+1 & -\frac{1}{\sqrt{\,c}} & &
\cdots & 0 & 0 \\ 0 & -\frac{1}{\sqrt{\,c}} & \frac{1}{c}+1 & &
\cdots & 0 & 0 \\
\vdots & \vdots & \vdots & & \ddots & \vdots & \vdots\\
0 & 0 & 0 & & \cdots &  \frac{1}{c}+1 & -\frac{1}{\sqrt{\,c}} \\
0 & 0 & 0 & & \cdots & -\frac{1}{\sqrt{\,c}} & \frac{1}{c}+1
\end{pmatrix} .
$$

We shall find estimates for $\,\lambda_{\min}(c,1)$, the smallest
zero of $|\lambda\,\mathbf{E_n}-\mathbf{D_n}|=0$. By change of
variable
$$
\lambda=1+\frac{1}{c}+\frac{2z}{\sqrt{c}}
$$
this equation simplifies to $\varphi_n(z)=0$, where
$$
\varphi_n(z)=
\begin{vmatrix} z+\frac{\sqrt{c}}{2} &
\frac{1}{2} & 0 & & \cdots & 0 & 0 \\
\frac{1}{2} & z & \frac{1}{2} & & \cdots & 0 & 0 \\ 0 & \frac{1}{2}
& z & & \cdots & 0 & 0 \\
\vdots & \vdots & \vdots & & \ddots & \vdots & \vdots\\

0 & 0 & 0 & & \cdots &  z & \frac{1}{2} \\
0 & 0 & 0 & & \cdots & \frac{1}{2} & z
\end{vmatrix}\,.
$$
It is easy to see that
\begin{equation}\label{e5.1}
\varphi_n(z)=\frac{1}{2^n}\,\big(U_n(z)+\sqrt{c}\,U_{n-1}(z)\big)\,,
\end{equation}
where $U_m(z)$ is the $m$-th Chebyshev polynomial of second kind,
$$
U_m(z)=\frac{\cos (m+1)\arccos(z)}{\sqrt{1-z^2}},\ \ \,z\in [-1,1].
$$
Indeed, \eqref{e5.1} is readily verified to be true for $n=1,\,2$,
and $\{\varphi_m\}$ satisfy the recurrence relation
$$
\varphi_m(z)=z\,\varphi_{m-1}(z)-\frac{1}{4}\,\varphi_{m-2}(z),\qquad
m\geq 3,
$$
which is also satisfied by $\{2^{-m}U_m\}$\,.

\begin{lemma}\label{l5.1}
The zeros of $\varphi_n$, $\,n\geq 2$, are located in $(-1,1)$ and
interlace with the zeros of $U_{n-1}$. Moreover, if $\tau$ is the
smallest zero of $\varphi_n$, then
$$
\tau=-1+\varepsilon_n\,,\qquad  \frac{1}{n(n+1)}<\varepsilon_n<
2\,\sin^2\frac{\pi}{2n}\,.
$$
\end{lemma}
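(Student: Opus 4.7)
The plan is to reduce both the interlacing and the localization of zeros to trigonometric identities via the substitution $z=\cos\theta$ (and its near--$z=-1$ version $z=-\cos\phi$, $\theta=\pi-\phi$), using $U_m(\cos\theta)=\sin((m+1)\theta)/\sin\theta$ together with formula \eqref{e5.1}. For the interlacing, I evaluate $\varphi_n$ at the zeros $z_k=\cos(k\pi/n)$, $k=1,\ldots,n-1$, of $U_{n-1}$. Since $U_{n-1}(z_k)=0$ and $U_n(z_k)=(-1)^k$, we get $2^n\varphi_n(z_k)=(-1)^k$, which alternates in sign and, by the intermediate value theorem, produces $n-2$ zeros of $\varphi_n$ between consecutive $z_k$. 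The two remaining zeros come from a sign check at the endpoints: $\varphi_n(1)=(n+1+\sqrt c\,n)/2^n>0$, while $\varphi_n(-1)=(-1)^n(n+1-\sqrt c\,n)/2^n$ has sign $(-1)^n$ because $\sqrt c<1$. Comparing these with the signs at $z_1$ and $z_{n-1}$ yields one zero in $(z_1,1)$ and one in $(-1,z_{n-1})$. Since $\deg\varphi_n=n$ and no $z_k$ itself is a zero of $\varphi_n$, these are all the zeros of $\varphi_n$ and they strictly interlace those of $U_{n-1}$. The upper bound on $\varepsilon_n$ is then immediate: $\tau<z_{n-1}=-\cos(\pi/n)$ gives $\varepsilon_n=\tau+1<1-\cos(\pi/n)=2\sin^2(\pi/(2n))$.

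For the lower bound I reparametrize near $z=-1$ by $z=-\cos\phi$, $\phi\in(0,\pi)$. Applying $\theta=\pi-\phi$ to the Chebyshev formulas yields
$$
(-1)^n\,2^n\sin\phi\cdot\varphi_n(-\cos\phi)=\sin\bigl((n+1)\phi\bigr)-\sqrt c\,\sin(n\phi)=:g(\phi),
$$
so $\tau=-\cos\phi^*$ where $\phi^*$ is the smallest positive zero of $g$. The crux is the claim that $g>0$ on $(0,\pi/(2n+1)]$, which forces $\phi^*>\pi/(2n+1)$ and hence $\varepsilon_n>1-\cos(\pi/(2n+1))=2\sin^2(\pi/(4n+2))$. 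The claim follows from the sum-to-product identity
$$
\sin\bigl((n+1)\phi\bigr)-\sin(n\phi)=2\cos\bigl((2n+1)\phi/2\bigr)\sin(\phi/2)\ge 0,
$$
valid on that interval because $(2n+1)\phi/2\le\pi/2$, together with $\sin(n\phi)>0$ (as $n\phi\le n\pi/(2n+1)<\pi/2$) and $\sqrt c<1$: chaining, $\sin((n+1)\phi)\ge\sin(n\phi)>\sqrt c\,\sin(n\phi)$.

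The main remaining obstacle, and the only step requiring a numerical estimate, is the elementary inequality
$$
2\sin^2\!\bigl(\pi/(4n+2)\bigr)>\frac{1}{n(n+1)},\qquad n\ge 2.
$$
I would verify it by applying the Taylor bound $\cos x\le 1-x^2/2+x^4/24$ (valid for $|x|\le\pi$) at $x=\pi/(2n+1)$; after multiplying through by $(2n+1)^2$ and using $(2n+1)^2=4n(n+1)+1$, the inequality reduces to $(\pi^2-8)/2>\pi^4/(24(2n+1)^2)+1/(n(n+1))$. The right-hand side is maximized at $n=2$, where it stays below $0.33$, well under the constant $(\pi^2-8)/2\approx 0.935$ on the left, while as $n\to\infty$ the ratio of the two sides of the displayed inequality tends to $\pi^2/8>1$. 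Every other ingredient of the lemma is forced cleanly by the trigonometric identities above; this is the one place the proof leans on an explicit arithmetic estimate.
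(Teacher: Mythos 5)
Your proof is correct. The interlacing part and the upper bound coincide with the paper's argument: you evaluate $\varphi_n$ at the zeros $\eta_k=\cos(k\pi/n)$ of $U_{n-1}$, use $2^n\varphi_n(\eta_k)=U_n(\eta_k)=(-1)^k$ together with the endpoint values $\varphi_n(\pm 1)$, and read off $\varepsilon_n<2\sin^2\frac{\pi}{2n}$ from $\tau<\eta_{n-1}$. Where you genuinely diverge is the lower bound. The paper performs one Newton step for $\varphi_n$ from the initial point $z=-1$, obtaining $\varepsilon_n>\frac{3(n+1-n\sqrt{c})}{n(n+1)(n+2-(n-1)\sqrt{c})}>\frac{1}{n(n+1)}$ via a monotonicity argument in $\sqrt{c}$. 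You instead pass to $z=-\cos\phi$, reduce $\varphi_n=0$ to $\sin((n+1)\phi)=\sqrt{c}\,\sin(n\phi)$, and show by the sum-to-product identity that this has no root for $\phi\le\pi/(2n+1)$; this yields the clean, $c$-independent bound $\varepsilon_n>2\sin^2\frac{\pi}{4n+2}\sim\frac{\pi^2}{8n^2}$, which is asymptotically sharper than $\frac{1}{n(n+1)}$ (since $\pi^2/8>1$) and nicely mirrors the form of the upper bound. The price is the extra elementary verification that $2\sin^2\frac{\pi}{4n+2}>\frac{1}{n(n+1)}$ for $n\ge 2$; your Taylor-bound computation ($\pi^2/2-4\approx 0.935$ versus a right-hand side below $0.33$ at $n=2$ and decreasing) does close this correctly, whereas the paper's Newton step lands on $\frac{1}{n(n+1)}$ directly with no numerics. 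Both routes are valid; yours buys a stronger and more transparent intermediate estimate at the cost of one arithmetic check.
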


\begin{proof} Clearly, $\varphi_n$ is a monic polynomial of degree $n$.
Let $\eta_k=\cos \frac{k\pi}{n}$, $\,k=1,\ldots,n-1$, be the zeros
of $U_{n-1}$, then $-1<\eta_{n-1}<\eta_{n-2}<\cdots<\eta_1<1$, and
$$
\sgn \varphi(\eta_k)=\sgn U_n(\eta_k)=(-1)^{k}\,,\qquad k=1,\ldots,
n-1.
$$
This and
$\,\varphi_n(-1)=(-1)^{n}\,2^{-n}\,\big(n+1-n\,\sqrt{c}\big)$,
$\,\varphi_n(1)=2^{-n}\,\big(n+1+n\,\sqrt{c}\big)\,$ imply that the
zeros of $\,\varphi_n\,$ lie in $\,(-1,1)\,$ and interlace with the
zeros of $\,U_{n-1}$.

The upper bound for $\varepsilon_n$ follows from
$$
\tau<\eta_{n-1}=-\cos\frac{\pi}{n}=-1+2\sin^2\frac{\pi}{2n}\,.
$$
To obtain the lower bound for $\varepsilon_n$, we apply one step of
Newton's method for finding $\tau$ as the smallest zero of
$\varphi_n(z)$ with initial value $\tau^{(0)}=-1$. We have
$$
\tau>\tau^{(1)}=-1+\frac{\varphi_n(-1)}{\varphi_n^{\prime}(-1)}
=-1+\frac{3(n+1-n\sqrt{c})}{n(n+1)\big(n+2-(n-1)\sqrt{c}\big)}>
-1+\frac{1}{n(n+1)}\,,
$$
where for the last inequality we have used that
$g(x)=\frac{n+1-n\,x}{n+2-(n-1)\,x}$ is a decreasing function in
$(0,1)$.
\end{proof}

Going back to variable $\lambda$, we find
$$
\lambda_{\min}(c,1)=1+\frac{1}{c}+\frac{2\tau}{\sqrt{c}}
=1+\frac{1}{c}+\frac{2(-1+\varepsilon_n)}{\sqrt{c}}
=\Big(\frac{1}{\sqrt{c}}-1\Big)^2+\frac{2\varepsilon_n}{\sqrt{c}}\,,
$$
hence
\begin{equation}\label{e5.2}
\frac{1}{\lambda_{\min}(c,1)}=\frac{1}{\Big(\dfrac{1}
{\sqrt{c}}-1\Big)^2\Big(1+\dfrac{2\sqrt{c}\,\varepsilon_n}{(1-\sqrt{c})^2}\Big)}\,.
\end{equation}
Now \eqref{e3.7}, \eqref{e5.2} and the estimates for $\varepsilon_n$
from Lemma~\ref{l5.1} imply
\begin{theorem}\label{t5.2}
For any $n\geq 2$, the best constant $\,\gamma_n(c,1)\,$ in the
Markov-Bernstein inequality
$$
\Vert \Delta\,p\Vert_{c,1}\leq \gamma_n(c,1)\,\Vert
p\Vert_{c,1}\,,\qquad p\in\PP_n\,,
$$
admits the estimates
\begin{equation}\label{e5.3}
\frac{1+\dfrac{1}{\sqrt{c}}}{\Big(1+\dfrac{4\sqrt{c}}
{(1-\sqrt{c})^2}\sin^2\frac{\pi}{2n}\Big)^{1/2}}\leq \gamma_n(c,1)
\leq \frac{1+\dfrac{1}{\sqrt{c}}}{\Big(1+\dfrac{2\sqrt{c}}
{(1-\sqrt{c})^2\,n(n+1)}\Big)^{1/2}}\,.
\end{equation}
\end{theorem}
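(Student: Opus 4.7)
The proof is essentially a bookkeeping step that puts together Theorem~\ref{t3.1}, the identity \eqref{e5.2}, and the two-sided estimate for $\varepsilon_n$ from Lemma~\ref{l5.1}; all the analytic work has already been done.

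The plan is to start from the representation
$$
\gamma_n(c,1)=\frac{1/c-1}{\sqrt{\lambda_{\min}(c,1)}}
$$
provided by Theorem~\ref{t3.1} and feed in \eqref{e5.2}. Squaring and simplifying the prefactor $(1/c-1)^2/(1/\sqrt{c}-1)^2$ is the first key step: writing $1/c-1=(1-\sqrt{c})(1+\sqrt{c})/c$ and $(1/\sqrt{c}-1)^2=(1-\sqrt{c})^2/c$, this ratio collapses to $(1+1/\sqrt{c})^2$. Consequently,
$$
\gamma_n(c,1)=\frac{1+\dfrac{1}{\sqrt{c}}}{\Bigl(1+\dfrac{2\sqrt{c}\,\varepsilon_n}{(1-\sqrt{c})^2}\Bigr)^{1/2}}.
$$

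The second step is to observe that the right-hand side is a monotonically decreasing function of $\varepsilon_n$. Therefore, to obtain the upper bound for $\gamma_n(c,1)$ we plug in the lower bound $\varepsilon_n>1/(n(n+1))$ from Lemma~\ref{l5.1}, and to obtain the lower bound for $\gamma_n(c,1)$ we plug in the upper bound $\varepsilon_n<2\sin^2(\pi/(2n))$ from the same lemma. The factor $2$ turning into $4$ in the lower-bound expression of \eqref{e5.3} comes from $2\sqrt{c}\cdot 2\sin^2(\pi/(2n))=4\sqrt{c}\sin^2(\pi/(2n))$. This yields exactly the two-sided estimate \eqref{e5.3}.

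There is no real obstacle here; the only thing to be careful about is keeping the algebraic identities straight (in particular the collapsing of the prefactor to $1+1/\sqrt{c}$, which is what guarantees the correct asymptotic limit $\gamma_n(c,1)\to 1+1/\sqrt{c}$ as $n\to\infty$ promised in Theorem~\ref{Th3}(i)). Once \eqref{e5.3} is in hand, letting $n\to\infty$ squeezes $\gamma_n(c,1)$ to $1+1/\sqrt{c}$, which completes the proof of \eqref{e1.6} and, combined with inequality \eqref{e1.5} that has already been established via Theorem~\ref{Th2}, finishes the proof of Theorem~\ref{Th3}(i).
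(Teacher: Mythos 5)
Your proposal is correct and follows exactly the paper's argument: the paper likewise obtains Theorem~\ref{t5.2} by combining \eqref{e3.7}, \eqref{e5.2}, and the bounds on $\varepsilon_n$ from Lemma~\ref{l5.1}, and your algebraic simplification of the prefactor to $1+1/\sqrt{c}$ together with the monotone dependence on $\varepsilon_n$ supplies precisely the bookkeeping the paper leaves implicit.
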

Theorem~\ref{Th3}(i) now follows from the two-sided estimates
\eqref{e5.3}. Note that the upper estimate for $\,\gamma_n(c,1)\,$
in \eqref{e5.3} sharpens the one in Theorem~\ref{Th3}~(i).
\section{Monotone dependence of eigenvalues on $\beta$}

The statement of Theorem~\ref{Th3}\ (ii) is a consequence of the
following

\begin{proposition}
\label{Prop5.1}
For a fixed $c\in (0,1)$, each eigenvalue $\lambda$ of the matrix
$\mathbf{B_n}(\beta,c)$, defined by \eqref{e3.5}, is a strictly
monotone increasing function of $\,\beta\,$ in the interval
$\,(0,\infty)$.
\end{proposition}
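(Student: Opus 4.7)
The plan is to use the Hellmann--Feynman formula exactly as the authors announce at the end of Section~1. Since $\mathbf{B_n}(\beta,c)$ in \eqref{e3.5} is Jacobi with strictly nonzero sub- and super-diagonal entries, every eigenvalue $\lambda=\lambda(\beta)$ is simple, and both $\lambda(\beta)$ and a corresponding unit eigenvector $\mathbf{v}(\beta)$ depend analytically on $\beta$. Differentiating $\mathbf{B_n}\mathbf{v}=\lambda\mathbf{v}$, multiplying on the left by $\mathbf{v}^{\top}$, and using $\mathbf{v}^{\top}\mathbf{v}=1$ gives
$$
\frac{d\lambda}{d\beta}=\mathbf{v}^{\top}\mathbf{B_n}'(\beta)\,\mathbf{v}\,,
$$
so the whole proposition reduces to showing that the entry-wise derivative $\mathbf{B_n}'(\beta)$ is positive definite on $\mathbb{R}^{n}$.

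A direct differentiation of \eqref{e3.5} shows that $\mathbf{B_n}'(\beta)$ is again tridiagonal, with diagonal entry $1/(kc)$ in position $k$ (for $k=1,\dots,n$) and off-diagonal entry $-1/\bigl(2\sqrt{(\beta+k)(k+1)c}\bigr)$ in position $(k,k+1)$. Gershgorin/diagonal-dominance fails here: for small $\beta$ and $k=1$ the off-diagonal magnitude dominates the diagonal. So instead I would write
$$
\mathbf{B_n}'(\beta)=\sum_{k=1}^{n-1}\mathbf{M}_{k}\,,
$$
where $\mathbf{M}_{k}$ is the $n\times n$ matrix supported only on rows/columns $k,k+1$, carrying the full off-diagonal $(k,k+1)$-entry and fractions of the diagonal entries, namely $(1-\alpha_{k})/(kc)$ at $(k,k)$ and $\alpha_{k+1}/((k+1)c)$ at $(k+1,k+1)$, with the choice $\alpha_{1}=0$, $\alpha_{n}=1$, and $\alpha_{j}=1/2$ for $2\le j\le n-1$. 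The weights $\alpha_{k}+(1-\alpha_{k})=1$ ensure that the diagonal contributions of the $\mathbf{M}_{k}$ reconstitute $\mathbf{B_n}'(\beta)$ exactly.

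Each $\mathbf{M}_{k}$ has only a $2\times 2$ nontrivial block, and its positive (semi)definiteness amounts to the determinant inequality $4(1-\alpha_{k})\alpha_{k+1}(\beta+k)\ge kc$. With the choice above this becomes $2(\beta+1)\ge c$ at $k=1$, $2(\beta+n-1)\ge (n-1)c$ at $k=n-1$, and $\beta+k\ge kc$ at interior $k$; all three hold \emph{strictly} because $\beta>0$ and $c\in(0,1)$. Consequently every $\mathbf{M}_{k}$ is positive semidefinite, and in fact $\mathbf{v}^{\top}\mathbf{M}_{k}\mathbf{v}=0$ forces $v_{k}=v_{k+1}=0$. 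Summing,
$$
\mathbf{v}^{\top}\mathbf{B_n}'(\beta)\,\mathbf{v}=\sum_{k=1}^{n-1}\mathbf{v}^{\top}\mathbf{M}_{k}\mathbf{v}\ge 0,
$$
with equality only if $v_{k}=0$ for every $k$; hence $\mathbf{B_n}'(\beta)$ is positive definite, and $d\lambda/d\beta>0$.

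The main obstacle is finding a decomposition that certifies positive definiteness of $\mathbf{B_n}'(\beta)$ in spite of the failure of naive diagonal dominance near $\beta=0$. The balanced-weight $2\times 2$ splitting works precisely because the hypothesis $c<1$ creates the slack $kc<\beta+k$ that AM--GM requires; the edge cases $k=1$ and $k=n-1$ are handled by the slightly asymmetric weights $\alpha_{1}=0$, $\alpha_{n}=1$. A short remark should also be included verifying that the Jacobi structure of $\mathbf{B_n}$ (nonzero off-diagonals for all $\beta>0$) gives simple analytic eigenvalues, so that the application of Hellmann--Feynman is rigorous rather than only formal.
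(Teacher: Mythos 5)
Your argument is correct and follows the same overall strategy as the paper: apply the Hellmann--Feynman identity $\lambda'(\beta)=\mathbf{v}^{\top}\mathbf{B_n}'(\beta)\,\mathbf{v}$ and reduce the whole proposition to positive definiteness of the entrywise derivative $\mathbf{B_n}'(\beta)$, whose entries you compute correctly. Where you diverge is in the certificate of positive definiteness. The paper invokes the Wall--Wetzel chain-sequence criterion with the constant chain sequence $\{1/4,1/4,\dots\}$, which reduces the matter to the inequalities $[\tilde{b}_{k,k+1}^{\prime}]^2/\bigl(\tilde{b}_{k,k}^{\prime}\,\tilde{b}_{k+1,k+1}^{\prime}\bigr)<1/4$, i.e.\ $\beta+k(1-c)>0$. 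You instead split $\mathbf{B_n}'$ into overlapping positive semidefinite $2\times2$ blocks with weights $\alpha_1=0$, $\alpha_n=1$, $\alpha_j=1/2$ otherwise; the determinant conditions $4(1-\alpha_k)\alpha_{k+1}(\beta+k)\ge kc$ all hold strictly, each block is in fact strictly definite on its two coordinates, and summing gives positive definiteness. Your decomposition is essentially a self-contained elementary proof of the very instance of Wall--Wetzel that the paper cites (the weights $\alpha_j$ play the role of the chain-sequence parameters), so it buys independence from the cited criterion at the cost of a slightly longer verification; at the interior indices your condition $\beta+k\ge kc$ coincides with the paper's. One small inaccuracy: diagonal dominance does not actually fail at row $k=1$ --- there the diagonal $1/c$ always exceeds the single off-diagonal $1/\bigl(2\sqrt{2(\beta+1)c}\bigr)$ for $c\in(0,1)$; the failure occurs at interior rows when $c$ is close to $1$ and $\beta$ is small, which is the genuine reason a finer argument than Gershgorin is needed. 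This slip does not affect the validity of your proof.
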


We apply the elegant method to establish monotonicity of zeros  of
orthogonal polynomials, or equivalently of eigenvalues of Jacobi
matrices based on Hellmann-Feynman's theorem \cite{Hel, Fey} and
Wall-Wetzel's criterion \cite{WV} for positive definiteness  of
Jacobi matrices. We describe it briefly and refer to Chapter 7.3 in
Ismail's book \cite{IsmBook} as well as to \cite{Ism87,  IsmMul,
IsmZ} for more details. Consider the parametric sequence
$\{p_k(x;\tau)\}_{k=0}^\infty$ of orthonormal  polynomials which is
generated by the three term recurrence relation
\begin{eqnarray*}
p_{-1}(x;\tau) & = & 0,\\
p_0(x;\tau) & = & 1,\\
x\, p_k(x;\tau) & = & a_{k}(\tau)\, p_{k+1}(x;\tau) + b_{k}(\tau)\,
p_k(x;\tau) + a_{k-1}(\tau)\, p_{k-1}(x;\tau), \ \  k\geq0,
\end{eqnarray*}
where $a_{k-1}(\tau)>0$. The zeros of the polynomial $p_n(x;\tau)$
coincide with the  eigenvalues of the Jacobi matrix $\mathbf{J_n} =
\mathbf{J_n}(\tau)$, whose diagonal entries are $b_k(\tau)$, $k=0,
\ldots, n-1$, and the off-diagonal ones are $a_k(\tau)$, $k=0,
\ldots, n-2$. Moreover, if $\lambda_j=\lambda_j(\tau)$ is a zero of
$p_n(x;\tau)$ and
\[
\mathbf{p_j} = (p_0(\lambda_j;\tau), p_1(\lambda_j;\tau),  \ldots,
p_{n-1}(\lambda_j;\tau))^{\top},
\]
then
\[
\mathbf{J_n}\, \mathbf{p_j} = \lambda_j\, \mathbf{p_j}.
\]
Let us denote by $\mathbf{J_n^\prime}=\mathbf{J_n^\prime}(\tau)$ the
tridiagonal matrix whose entries are the derivatives of the
corresponding entries of $\mathbf{J_n}(\tau)$. Then the
Hellmann-Feynman theorem, in the particular case which is convenient
for our objectives, reads as follows:

\begin{theorem}\label{HFtheo}
For every zero $\lambda_j(\tau)$ of $p_n(x;\tau)$ we have
\[
\lambda_{j}^{\prime}(\tau) =  \frac{\mathbf{p_j}^{\top}
\mathbf{J_n}^\prime \mathbf{p_j} } {\mathbf{p_j}^{\top} \mathbf{p_j}
}.
\]
Furthermore, if the numerator of the latter expression is positive,
then the zeros $\lambda_j(\tau)$ of $p_{n}(x;\tau)$ are increasing
functions of $\tau$. In particular, the latter statement holds if
$\mathbf{J_n^\prime}$ is a positive definite matrix.
\end{theorem}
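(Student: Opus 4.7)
The plan is to derive the formula by differentiating the eigenvalue identity and exploiting the symmetry of $\mathbf{J_n}$. First I would set up the matrix form of the three–term recurrence: for any $x$ and any $\tau$, the vector
$$\mathbf{p}(x;\tau) := (p_0(x;\tau), p_1(x;\tau), \ldots, p_{n-1}(x;\tau))^{\top}$$
satisfies
$$x\,\mathbf{p}(x;\tau) = \mathbf{J_n}(\tau)\,\mathbf{p}(x;\tau) + a_{n-1}(\tau)\,p_n(x;\tau)\,\mathbf{e_n},$$
where $\mathbf{e_n}$ is the last standard basis vector. Specializing $x=\lambda_j(\tau)$, the last term vanishes precisely because $\lambda_j(\tau)$ is a zero of $p_n(\,\cdot\,;\tau)$, which identifies $\mathbf{p_j}(\tau) = \mathbf{p}(\lambda_j(\tau);\tau)$ as an eigenvector of $\mathbf{J_n}(\tau)$ with eigenvalue $\lambda_j(\tau)$. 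Since $p_0\equiv 1$, the eigenvector is automatically nonzero.

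Next I would differentiate the identity $\mathbf{J_n}(\tau)\mathbf{p_j}(\tau) = \lambda_j(\tau)\mathbf{p_j}(\tau)$ with respect to $\tau$, obtaining
$$\mathbf{J_n}^{\prime}\,\mathbf{p_j} + \mathbf{J_n}\,\mathbf{p_j}^{\prime} = \lambda_j^{\prime}\,\mathbf{p_j} + \lambda_j\,\mathbf{p_j}^{\prime}.$$
Taking the inner product with $\mathbf{p_j}$ and using symmetry of $\mathbf{J_n}$ in the form
$$\mathbf{p_j}^{\top}\mathbf{J_n}\,\mathbf{p_j}^{\prime} = (\mathbf{J_n}\,\mathbf{p_j})^{\top}\mathbf{p_j}^{\prime} = \lambda_j\,\mathbf{p_j}^{\top}\mathbf{p_j}^{\prime},$$
the cross terms cancel, leaving
$$\lambda_j^{\prime}(\tau) = \frac{\mathbf{p_j}^{\top}\mathbf{J_n}^{\prime}\,\mathbf{p_j}}{\mathbf{p_j}^{\top}\mathbf{p_j}},$$
which is the asserted formula. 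The monotonicity conclusion is then immediate: if the quadratic form $\mathbf{p_j}^{\top}\mathbf{J_n}^{\prime}\mathbf{p_j}$ is positive (in particular, if $\mathbf{J_n}^{\prime}$ is positive definite, since $\mathbf{p_j}\neq 0$), the right-hand side is strictly positive, and $\lambda_j(\tau)$ is strictly increasing in $\tau$.

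The one nontrivial point to dispatch is the differentiability of $\lambda_j(\tau)$ and $\mathbf{p_j}(\tau)$. I would argue it by the implicit function theorem applied to $F(\lambda,\tau):=p_n(\lambda;\tau)=0$: the orthogonal polynomial $p_n(\,\cdot\,;\tau)$ has only simple real zeros, so $\partial_\lambda F(\lambda_j(\tau),\tau)\neq 0$, which gives smoothness of $\tau\mapsto\lambda_j(\tau)$ as long as the recurrence coefficients $a_k(\tau),b_k(\tau)$ are smooth; smoothness of $\mathbf{p_j}(\tau)$ then follows by composition.

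The main (indeed only) obstacle is the algebraic cancellation step: one must notice that $\mathbf{p_j}$ need not be normalized and that $\mathbf{p_j}^{\prime}$ is not orthogonal to $\mathbf{p_j}$, yet the symmetry of $\mathbf{J_n}$ kills exactly the unwanted terms. Once this is seen, the rest is routine, and no explicit information about the entries of $\mathbf{J_n}$ is needed beyond symmetry and smoothness in $\tau$.
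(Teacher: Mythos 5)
Your proof is correct and is the standard argument for the Hellmann--Feynman theorem for Jacobi matrices: the paper itself gives no proof of this statement, citing instead Hellmann, Feynman, and Chapter 7.3 of Ismail's book, where essentially this same derivation (differentiate the eigenvalue identity, pair with the eigenvector, use symmetry of $\mathbf{J_n}$ to cancel the $\mathbf{p_j}^{\prime}$ terms) appears. You correctly handle the two points that are easy to gloss over --- that $\mathbf{p_j}\neq 0$ because $p_0\equiv 1$, and that differentiability of $\lambda_j(\tau)$ follows from simplicity of the zeros of $p_n(\cdot;\tau)$ via the implicit function theorem --- so nothing is missing.
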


Let us recall that a sequence $\{c_n\}_1^\infty$ of non-negative
numbers is called a chain sequence if there exists another sequence
$\{v_n\}_0^\infty$, called a parametric one, such that $0 \leq v_0 <
1$, $0 < v_n < 1$ for all $n \in \mathbb{N}$, and $c_n= (1-v_{n-1})
v_n$ for every $n \in \mathbb{N}$ (see \cite{IsmBook}). A criterion
for positive definiteness of Jacobi matrices, due to Wall and Wetzel
\cite{WV}, applied to $\mathbf{J_n^\prime}$ yields:
\begin{proposition}\label{propcos}
Let $\mathbf{J_{n}^\prime}$ be a Jacobi matrix with positive
diagonal  entries. If $b_i^\prime>0$ for $i=0,\ldots,n-1$, and
there is a chain sequence $\{ \kappa_i \}$ such that
\[
\frac{ [a_{i}^\prime]^{2} }{ b_{i}^\prime b_{i+1}^\prime  }  <
\kappa_i,\ \ \mathrm{for}\ \ \ i=0,1,\dots, n-2,
\]
then $\mathbf{J_{n}^\prime}$ is positive definite.
\end{proposition}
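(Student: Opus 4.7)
The plan is to prove positive definiteness of $\mathbf{J_n^\prime}$ directly from the definition, by decomposing the quadratic form $\mathbf{x}^\top \mathbf{J_n^\prime}\,\mathbf{x}$ into a sum of manifestly non-negative pieces whose vanishing forces $\mathbf{x}=\mathbf{0}$. The non-negative pieces will be strictly positive definite $2\times 2$ Gram-type forms attached to consecutive pairs of coordinates, plus non-negative boundary remainders, and the chain sequence parameters associated to $\{\kappa_i\}$ will govern exactly how each diagonal entry $b_i^\prime$ is split between the two neighbouring blocks.

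First I would unpack the chain sequence hypothesis: since $\{\kappa_i\}_{i=0}^{n-2}$ is a chain sequence, there exist parameters $v_0\in[0,1)$ and $v_1,\dots,v_{n-1}\in(0,1)$ with
\[
\kappa_i=(1-v_i)\,v_{i+1},\qquad i=0,1,\dots,n-2.
\]
Using this parametrisation I would write each $b_i^\prime x_i^2$ as $(1-v_i)b_i^\prime x_i^2+v_i b_i^\prime x_i^2$ and regroup the terms of $\mathbf{x}^\top \mathbf{J_n^\prime}\,\mathbf{x}$ as
\[
\mathbf{x}^\top \mathbf{J_n^\prime}\,\mathbf{x}
=v_0 b_0^\prime x_0^2+\sum_{i=0}^{n-2}Q_i(x_i,x_{i+1})+(1-v_{n-1})b_{n-1}^\prime x_{n-1}^2,
\]
where
\[
Q_i(x_i,x_{i+1}):=(1-v_i)b_i^\prime x_i^2+2a_i^\prime x_i x_{i+1}+v_{i+1}b_{i+1}^\prime x_{i+1}^2.
\]
A direct check of coefficients (combining the $v_{i+1}b_{i+1}^\prime$ from $Q_i$ with $(1-v_{i+1})b_{i+1}^\prime$ from $Q_{i+1}$ to reconstitute $b_{i+1}^\prime x_{i+1}^2$ at each interior index) confirms this identity.

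The payoff is that each $Q_i$ is a strictly positive definite $2\times 2$ form: its diagonal entries $(1-v_i)b_i^\prime$ and $v_{i+1}b_{i+1}^\prime$ are positive, and its determinant equals
\[
(1-v_i)v_{i+1}b_i^\prime b_{i+1}^\prime-(a_i^\prime)^2=\Bigl(\kappa_i-\frac{(a_i^\prime)^2}{b_i^\prime b_{i+1}^\prime}\Bigr)b_i^\prime b_{i+1}^\prime>0
\]
by the strict inequality in the hypothesis. The boundary remainders $v_0 b_0^\prime x_0^2$ and $(1-v_{n-1})b_{n-1}^\prime x_{n-1}^2$ are non-negative. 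For any $\mathbf{x}\neq\mathbf{0}$ some coordinate $x_k$ is nonzero, and then $x_k$ appears in at least one block $Q_i$ (as $x_i$ if $k\le n-2$, or as $x_{i+1}$ with $i=k-1$ if $k\ge 1$); that block contributes strictly to the sum, forcing $\mathbf{x}^\top \mathbf{J_n^\prime}\,\mathbf{x}>0$. The only delicate point is the indexing bookkeeping—aligning the chain sequence parameters $v_i$ with the Jacobi matrix entries and verifying that the split of diagonals reconstitutes the original quadratic form—so I would carry out that verification explicitly before drawing the conclusion.
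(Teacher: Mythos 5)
Your argument is correct. Note first that the paper does not actually prove this proposition: it is quoted as a known criterion of Wall and Wetzel \cite{WV} (see also Chapter~7 of Ismail's book), so there is no in-paper proof to compare against. Your decomposition is, in substance, the standard proof of that criterion, and every step checks out: writing $\kappa_i=(1-v_i)v_{i+1}$ with $v_0\in[0,1)$ and $v_i\in(0,1)$ for $i\ge 1$, the splitting $b_i'x_i^2=(1-v_i)b_i'x_i^2+v_ib_i'x_i^2$ does reconstitute the quadratic form exactly (the coefficient of $x_0^2$ is $v_0b_0'+(1-v_0)b_0'$, that of an interior $x_j^2$ is $v_jb_j'+(1-v_j)b_j'$, and that of $x_{n-1}^2$ is $v_{n-1}b_{n-1}'+(1-v_{n-1})b_{n-1}'$), each block $Q_i$ has positive diagonal and determinant $\bigl(\kappa_i-[a_i']^2/(b_i'b_{i+1}')\bigr)b_i'b_{i+1}'>0$ by the strict hypothesis, and the two boundary remainders are nonnegative because $v_0\ge 0$ and $v_{n-1}<1$. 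The final localization step (any nonzero coordinate lies in some strictly positive block) is also fine, with the only caveat being the degenerate case $n=1$, where there are no blocks at all and positive definiteness is immediate from $b_0'>0$; you may wish to dispose of that case in one sentence. Compared with simply invoking \cite{WV}, your write-up buys a short, self-contained, purely elementary proof, at the cost of a little indexing bookkeeping that you have in fact carried out correctly.
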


{\em Proof of Proposition} \ref{Prop5.1}. All we need is to show
that the matrix $\mathbf{\tilde{B}_n^\prime}=
(\tilde{b}_{ij}^\prime)_{n\times n}$ obtained from
$\mathbf{\tilde{B}_n}$ by partial differentiation of its entries
with respect to $\beta$ is positive definite. Straightforward
calculations show that $\tilde{b}_{k,k}^\prime=1/(k\,c)$,
$k=1,\ldots, n$, and
$$
\tilde{b}_{k,k+1}^\prime = \frac{1}{2\sqrt{(k+1)(k+\beta)c}},\,  \ \
k=1,\ldots, n-1.
$$
It follows by Proposition~\ref{propcos} and the fact that that
$\{\kappa_i \}=\{ 1 /4,1/4, \ldots\}$ is a chain sequence that a
sufficient condition for $\mathbf{\tilde{B}_n^\prime}$ to be
positive definite is that the following inequalities are satisfied:
\begin{equation}\label{e4.1}
\frac{[\tilde{b}_{k,k+1}^\prime]^2}{\tilde{b}_{k,k}^\prime\,
\tilde{b}_{k+1,k+1}^\prime}<\frac{1}{4}\,,\qquad k=1,\ldots,n-1\,.
\end{equation}
They are equivalent to inequalities
$$
\beta+ k (1-c)>0\,,\qquad k=1,\ldots,n-1\,,
$$
which are obviously true, as $\beta>0$ and $c\in (0,1)$. Hence,
$\mathbf{\tilde{B}_n^\prime}$ is a positive definite matrix.
\qed

In particular, the smallest eigenvalue of $\,\tilde{\mathbf{B}}\,$,
$\lambda_{\min}(c,\beta)$, is a monotone increasing function of
$\beta$. By Theorem~\ref{t3.1},
$\,\gamma_n(c,\beta)=(1/c-1)/\sqrt{\lambda_{\min}(c,\beta)}\,$ is a
monotone decreasing function of $\beta$, which proves
Theorem~\ref{Th3}~(ii).


\section{Comments}
The Markov-Bernstein inequality for sequences, Theorem \ref{Th2},
was rather easy to prove, and then was used in the proof of parts
(i) and (iii) of Theorem~\ref{Th3}, the Markov-Bernstein inequality
for polynomials. Since we observe, at least for $\,\beta=1$,
coincidence of the best Markov constant in $\ell_2(c,\beta)$ with
the limit of $\,\gamma_n\,$ in the polynomial case, a natural
question is whether, on the contrary, Markov-Bernstein inequality
for sequences can be deduced from Markov-Bernstein inequality for
polynomials. Such an approach would be possible if the corresponding
general Bernstein's problem had a solution in this particular case.
Indeed, let us suppose that the following question has an
affirmative answer:  Is it true that, given $c\in (0,1)$, $\beta \in
[1,\infty)$, a sequence $f\in \ell_{2}(c,\beta)$ and
$\varepsilon>0$, there is an algebraic polynomial $p$, such that
$$
\sum_{k=0}^\infty  \frac{(\beta)_k}{k!}\, c^k\, [f(k)-p(k)]^2 < \varepsilon?
$$
Then the Markov-Bernstein inequality for sequences would be an
immediate consequence of the Markov-Bernstein inequality for
polynomials.

A general version of Bernstein's approximation problem reads as
follows:  given  a weight function $W: \mathbb{R}\to [0,1]$ and the
corresponding weighted norm is $\| \cdot \|_W$, say
$L_W^p(\mathbb{R})$, such that the corresponding moments are finite
in the norm, i.e. $\| P \|_W < \infty$ for every $P\in \PP$, is it
true that for every function $f$ with $\| f \|_W < \infty$ and each
$\varepsilon>0$, there is an algebraic polynomial $P$ such that $\|
f-P \|_W < \varepsilon$? The weights $W$ for which this problem has
an affirmative answer are sometimes called admissible ones. The
first results concerning characterisation of the admissible kernels
for the uniform norm were obtained by S. N. Mergelyan, N. I.
Akhiezer, H. Pollard,  S. Izumi and T. Kawata, M. Dzrbasjan and L.
Carleson. We refer to Lubisnky's  survey \cite{Lub} and P. Koosis's
\cite{Koo} and M. Ganzburg's \cite{Gan2008} books for details,
further contributions and references.

The result in this direction which is the most relevant in our
situation is due to G. Freud \cite[Theorem 3.3. on p. 73]{Fre} (see
also \cite[Theorem 1.7]{Lub}). His contribution is an extension of
M. Riesz' one \cite{MR2} which was obtained even before Bernstein
posed his problem. G. Freud's result applies to our problem but only
for sequences in $\ell_{2,c,\beta}$ of at most polynomial growth,
despite that it implies that such sequence would posses one-sided
polynomial approximations.

We rase also a problem which is the ``continuous'' counterpart of
the one stated above. More precisely, it would be of interest to
know if $W(x) = e^{-ax} \Gamma(x+\beta)/\Gamma(x+1)$, where $a>0$
and $\beta >1$, is an admissible weight for Bernstein's
approximation problem in $L_W^p(0,\infty)$, for $p\geq 1$.  The
above mentioned results of S. Izumi and T. Kawata \cite{IK1937}, M.
Dzrbasjan \cite{Dzr1947}  and L. Carleson \cite{Car1951}  imply that
this is true for $\beta=1$.

Finally, it would be interest to study the eventual  extensions  of
the results in Theorems \ref{Th2} and \ref{Th3} for the relevant
weighted $\ell_p$ norms.

\section*{Acknowledgments}
The first author thanks Doron Lubinsky for his interest in and
valuable comments about the discrete version of Bernstein's
approximation problem stated in the last section. The second author
thanks Vilmos Totik for the fruitful discussions on the results in
this paper. Both authors are grateful to the anonymous referee of
the previous version of the paper, whose careful reading and
valuable suggestions contributed to improvement of the presentation.

\end{document}